\numberwithin{equation}{section}
\theoremstyle{definition}
\newtheorem{exm} {Example}[section]
\newtheorem{dfn}[exm] {Definition}
\newtheorem{rem}[exm] {Remark}
\theoremstyle{plane}
\newtheorem{lem}[exm]{Lemma}
\newtheorem{prop}[exm]{Proposition}
\newtheorem{thm}[exm]{Theorem}
\newtheorem{cor}[exm]{Corollary}
\newcommand{\filt}[0]{\EuScript{F}}
\newcommand{\domain}[0]{\mathcal{F}}
\newcommand{\form}[0]{\mathcal{E}}
\newcommand{\resolop}[0]{R}
\newcommand{\resol}[0]{r}
\newcommand{\sigalg}[0]{\EuScript{M}}
\newcommand{\smooth}[0]{\mathcal{S}}
\title{Generalized Kac's moment formula for positive continuous additive functionals of Markov processes} 
\date{}
\author{Naotaka Kajino\thanks{Research Institute for Mathematical Sciences, Kyoto University, nkajino@kurims.kyoto-u.ac.jp} 
\and Ryoichiro Noda\thanks{Research Institute for Mathematical Sciences, Kyoto University, sgrndr@kurims.kyoto-u.ac.jp}}
\begin{document}

\maketitle
\begin{abstract}
  We establish a formula for moments of certain random variables involving positive continuous
  additive functionals (PCAFs) of standard processes which have absolutely continuous transition
  functions and are in duality with standard processes with absolutely continuous transition functions,
  generalizing the classical Kac's moment formula. In particular, all our results are
  applicable to the more familiar case of symmetric Hunt processes which are associated with
  regular Dirichlet forms and have absolutely continuous transition functions.

  \vskip.2cm

	\noindent \textit{Keywords:} Standard process with a dual, symmetric Hunt process, positive continuous additive functional (PCAF) in the strict sense, 
    smooth measure in the strict sense, Revuz correspondence, generalized Kac's moment formula.
		
	\vskip.2cm
		
	\noindent \textit{Mathematics Subject Classification (2020):} Primary 60J25, 60J55; Secondary 60J46.
\end{abstract}


\section{Introduction} \label{sec: Intro}

The aim of this paper is to extend Kac's moment formula \cite{Kac_51_connection} (see \eqref{1. eq: original Kac's formula} below)
to general positive continuous additive functionals (PCAFs) of Markov processes
and to certain more general random variables involving PCAFs.
Let us first recall Kac's moment formula for the Brownian motion.
Let $X = (X_{t})_{t \geq 0}$ be the one-dimensional Brownian motion,
$f \colon \mathbb{R} \to [0, \infty)$ bounded and Borel measurable,
and $(p_{t}(x,y))_{t>0, \, x, y \in \mathbb{R}}$ the transition density of the Brownian motion, i.e.,
$p_{t}(x,y) = (2\pi)^{-1/2} \exp(-(x-y)^{2}/2t)$.
Consider the additive functional $A = (A_{t})_{t \geq 0}$ of $X$
given by setting $A_{t} \coloneqq \int_{0}^{t} f(X_{s})\, ds$.
Then, Kac's moment formula reads as follows: 
for any $t \in (0, \infty)$, $x \in \mathbb{R}$, and positive integer $k$,
\begin{align}
  E_{x}[A_{t}^{k}]
  &= 
  k! 
  \int_{0}^{t} dt_{1} \int_{t_{1}}^{t} dt_{2} \cdots \int_{t_{k-1}}^{t} dt_{k} 
  \int_{\mathbb{R}} f(y_{1})\, dy_{1} \int_{\mathbb{R}} f(y_{2})\, dy_{2}\\
  &\qquad 
  \cdots \int_{\mathbb{R}} f(y_{k})\, dy_{k}\,
  p_{t_{1}}(x, y_{1}) p_{t_{2}-t_{1}}(y_{1}, y_{2}) \cdots p_{t_{k}-t_{k-1}}(y_{k-1}, y_{k}).
  \label{1. eq: original Kac's formula}
\end{align}
Kac's moment formula plays fundamental roles in studying occupation times and Feynman--Kac semigroups
(see, e.g., \cite{Darling_Kac_57_occupation,Sznitman_98_Brownian}).
To obtain \eqref{1. eq: original Kac's formula}, 
one does not need to use any specific properties of Brownian motion, except for the Markov property. 
Therefore, it is not difficult to extend \eqref{1. eq: original Kac's formula} to general Markov processes with transition densities. 
On the other hand, the condition that the additive functional $A$ is of the form $A_{t}=\int_{0}^{t} f(X_{s}) ds$ is crucial for the applicability of Fubini's theorem, 
and it is not clear how \eqref{1. eq: original Kac's formula} can be extended to more general additive functionals.

In this paper, we first state and prove our results in the framework of a symmetric Hunt process $X$
whose Dirichlet form is regular and whose transition function is absolutely continuous,
and for positive continuous additive functionals (PCAFs) in the strict sense of $X$.
(The precise definitions of these objects are recalled in Section~\ref{sec: Setting and main results} below.)
In fact, the choice of this setting is solely for the sake of the better familiarity to the reader,
and our results hold, without any changes in the proofs, in the more general framework,
as detailed in Section \ref{sec: extension to processes with duals} below, of a standard process
with absolutely continuous transition function and in duality with another such standard process.

Since some preparation is required to state the main theorems of this paper (Theorems \ref{2. thm: main result} and \ref{2. thm: generalized Kac's formula}),
here we explain briefly what our version of Kac's moment formula \eqref{1. eq: original Kac's formula}
looks like in the present general setting.
Fix a locally compact separable metrizable topological space $S$, a Radon measure $m$ on $S$ with full support,
and an $m$-symmetric Hunt process $X = (X_{t})_{t \geq 0}$ on $S$ whose Dirichlet form $(\form, \domain)$ is regular,
and assume that $X$ admits a jointly Borel measurable transition density $(p_{t}(x,y))_{t > 0, \, x, y \in S}$ with respect to $m$.
(As mentioned in the previous paragraph, our results hold as long as
$m$ is a $\sigma$-finite Borel measure on $S$ and $X$ is a standard process
on $S$ with a jointly Borel measurable transition density $(p_{t}(x,y))_{t > 0, \, x, y \in S}$
with respect to $m$ such that $(t,x,y)\mapsto p_{t}(y,x)$ is a transition density of another
standard process on $S$ with respect to $m$.) For each $\alpha \in (0, \infty)$,
we define the $\alpha$-potential density $(\resol_{\alpha}(x,y))_{x,y \in S}$ of $X$ by setting 
$\resol_{\alpha}(x,y) \coloneqq \int_{0}^{\infty} e^{-\alpha t} p_{t}(x,y)\, dt$.
Let $A=(A_{t})_{t \geq 0}$ be a PCAF in the strict sense of $X$.
By the Revuz correspondence (see Lemma \ref{2. lem: Revuz correspondence in the strict sense}),
there exists a unique $\sigma$-finite Borel measure $\mu$ on $S$, called the \emph{Revuz measure} of $A$, satisfying
\begin{equation}  \label{1. eq: Revuz correspondence}
  E_{x}\biggl[
    \int_{0}^{\infty} e^{-\alpha t} f(X_{s})\, dA_{s}
  \biggr]
  = 
  \int_{S} \resol_{\alpha}(x,y) f(y)\, \mu(dy)
\end{equation}
for any $\alpha \in (0, \infty)$, $x \in S$, and Borel measurable function $f \colon S \to [0, \infty]$.
In this setting, our second main theorem (Theorem \ref{2. thm: generalized Kac's formula}) implies that,
for any $t \in (0, \infty)$, $x \in S$, and positive integer $k$,
\begin{align} 
  E_{x}[A_{t}^{k}]
  &= 
  k! 
  \int_{0}^{t} dt_{1} \int_{t_{1}}^{t} dt_{2} \cdots \int_{t_{k-1}}^{t} dt_{k} 
  \int_{S} \mu(dy_{1}) \int_{S} \mu(dy_{2}) \cdots \int_{S} \mu(dy_{k})\\
  &\qquad
  p_{t_{1}}(x, y_{1}) p_{t_{2}-t_{1}}(y_{1}, y_{2}) \cdots p_{t_{k}-t_{k-1}}(y_{k-1}, y_{k})
  \label{1. eq: generalized moment formula}
\end{align}
(see Corollary \ref{2. cor: applications} below).
This is a generalization of \eqref{1. eq: original Kac's formula}.
Indeed, if we consider a PCAF $A=(A_{t})_{t \geq 0}$ of $X$ given by $A_{t} \coloneqq \int_{0}^{t} f(X_{s})\, ds$
for a bounded Borel measurable function $f \colon S \to [0, \infty)$,
then the measure $\mu$ satisfying \eqref{1. eq: Revuz correspondence} is $\mu(dx) = f(x)\, m(dx)$.
Thus, \eqref{1. eq: original Kac's formula} is recovered from \eqref{1. eq: generalized moment formula}.

The special case of \eqref{1. eq: generalized moment formula}
where $X$ is the $\alpha$-dimensional Bessel process on $[0, \infty)$ for $\alpha\in(0,2)$
and $A$ is its local time at $0$ was obtained by Molchanov and Ostrovskii
in \cite[p.~129]{Molchanov_Ostrovskii_69_stable_trace},
by applying \eqref{1. eq: original Kac's formula} to absolutely continuous PCAFs
$A^{(n)}$ of $X$ converging to $A$ and then by letting $n\to\infty$
on the basis of explicit quantitative estimates on $X$. Unlike this argument in
\cite{Molchanov_Ostrovskii_69_stable_trace}, our proof of the main results of this paper including
\eqref{1. eq: generalized moment formula} is based only on the strong Markov property of $X$ and
a change-of-variable formula \cite[Chapter V, Lemma 2.2]{Blumenthal_Getoor_68_Markov}
for Lebesgue--Stieltjes integrals and does not require any quantitative estimates on $X$ or $A$.

There is also a work in a similar research direction by Fitzsimmons and Pitman \cite{Fitzsimmons_Pitman_99_Kac},
where they considered more general Markov processes 
and their moment formula is described in terms of certain potential operators associated with PCAFs
rather than transition densities and corresponding Revuz measures as in \eqref{1. eq: generalized moment formula}.
A striking difference between their results and ours is 
that they considered moments of additive functionals at Markov killing times of $X$,
whereas we consider moments of them at deterministic times.
Here, a Markov killing time of $X$ refers to a random time with the property that
the process $X$ killed at the time is again Markov,
and a finite deterministic time is usually not a Markov killing time of $X$.
For further discussions, 
see Remarks \ref{2. rem: comment on F-P results} and \ref{2. rem: extension to part process}.

We expect that our generalized moment formula will be useful in the study of various objects involving PCAFs
such as Feynman--Kac semigroups, 
just as the classical Kac's moment formula is. 
In fact, in \cite{Noda_pre_Continuity},
on the basis of our results,
the convergence of PCAFs is shown to be implied by the convergence of the potentials of the corresponding Revuz measures.

The remainder of this article is organized as follows.
In Section \ref{sec: Setting and main results},
we introduce the framework of an $m$-symmetric Hunt process whose Dirichlet form is regular and
whose transition function is absolutely continuous, state the main theorems of this article,
Theorems \ref{2. thm: main result} and \ref{2. thm: generalized Kac's formula}, and give some corollaries of them.
Then, in Section \ref{sec: Proofs of the main results}, we prove Theorems \ref{2. thm: main result} and \ref{2. thm: generalized Kac's formula}.
Finally, in Section~\ref{sec: extension to processes with duals}, 
we present the framework of a standard process with absolutely continuous transition function
and in duality with another such standard process, and give the background necessary for the
proofs of our results to extend immediately to this more general setting.


\section{Setting and main results} \label{sec: Setting and main results}

This section is divided into two subsections.
In Subsection \ref{sec: PCAFs and smooth measures},
we set out the framework for the discussions of Sections \ref{sec: Setting and main results} and \ref{sec: Proofs of the main results}
and introduce PCAFs and smooth measures in the strict sense.
Then, in Subsection \ref{sec: main results}, we present our main theorems, Theorems \ref{2. thm: main result} and \ref{2. thm: generalized Kac's formula},
and two corollaries of them (Corollaries \ref{2. cor: applications} and \ref{2. cor: finite exponential moment}).
Throughout this paper,
we fix a locally compact separable metrizable topological space $S$, and
write $S_{\Delta} = S \cup \{\Delta\}$ for the one-point compactification of $S$.
(N.B. If $S$ is compact, then we add $\Delta$ to $S$ as an isolated point.)
Any $[-\infty,\infty]$-valued function $f$ defined on $S$ is regarded as a function on $S_{\Delta}$ by setting $f(\Delta) \coloneqq 0$,
and we define $\|f\|_{\infty} \coloneqq \sup\{|f(x)| \mid x \in S\}$ for each such $f$.
The symbol $\mathbb{N}$ denotes the set of positive integers.
For $a,b \in [-\infty,\infty]$, or $[-\infty,\infty]$-valued functions $a,b$ on a common set,
we set $a\vee b \coloneqq \max\{a,b\}$ and $a\wedge b \coloneqq \min\{a,b\}$.
For a non-empty set $E$ and $E_{0} \subseteq E$,
we write $1_{E_{0}}=1^{E}_{E_{0}} \colon E \to \mathbb{R}$ for the function defined by setting
$1_{E_{0}}(x) \coloneqq 1$ for $x \in E_{0}$ and $1_{E_{0}}(x) \coloneqq 0$ for $x \in E \setminus E_{0}$.
Given a topological space $E$, we write $\mathcal{B}(E)$ for the Borel $\sigma$-algebra of $E$.


\subsection{Setting, PCAFs and smooth measures in the strict sense} \label{sec: PCAFs and smooth measures}

In this subsection,
we clarify the setting for our discussions
and introduce the main objects: PCAFs and smooth measures in the strict sense.
Our results are established within the theory of regular symmetric Dirichlet forms and symmetric Hunt processes.
For details of this theory, the reader is referred to \cite{Chen_Fukushima_12_Symmetric,Fukushima_Oshima_Takeda_11_Dirichlet}.

We first fix the setting that is assumed throughout Sections \ref{sec: Setting and main results} and \ref{sec: Proofs of the main results}.
We let $m$ be a Radon measure on $S$ with full support, $(\form, \domain)$ be a regular symmetric Dirichlet form on $L^{2}(S, m)$
(see \cite[Section 1.1]{Fukushima_Oshima_Takeda_11_Dirichlet} for the definition of the notion of regular symmetric Dirichlet form),
and $X = (\Omega, \sigalg, (X_{t})_{t \in [0, \infty]}, (P_{x})_{x \in S_{\Delta}}, (\theta_{t})_{t \in [0,\infty]})$
be an $m$-symmetric Hunt process on $S$ whose Dirichlet form is $(\form, \domain)$ in the sense of \cite[Theorem 7.2.1]{Fukushima_Oshima_Takeda_11_Dirichlet}.
Here, $\theta_{t}$ denotes the shift operator,
i.e., a map $\theta_{t} \colon \Omega \to \Omega$ satisfying $X_{s} \circ \theta_{t} = X_{s+t}$ for any $s \in [0, \infty]$.
We let $\zeta$ denote the life time of $X$, i.e., a $[0,\infty]$-valued function on $\Omega$
satisfying $\{ X_{t} = \Delta \} = \{ \zeta \leq t \}$ for each $t \in [0,\infty]$,
and write $\filt_{*} = (\filt_{t})_{t \in [0,\infty]}$ 
for the minimum augmented admissible filtration of $X$ in $\Omega$ (see \cite[p.~397]{Chen_Fukushima_12_Symmetric}).
A Borel subset $N$ of $S$ is said to be \emph{properly exceptional} for $X$ if it satisfies $m(N) = 0$
and $P_{x}(\{X_{t} \mid t \in (0, \infty)\} \subseteq S_{\Delta} \setminus N) = 1$ for all $x \in S \setminus N$.
(Note that by \cite[Theorem A.2.3]{Fukushima_Oshima_Takeda_11_Dirichlet},
which is applicable due to the present assumption that $X$ is a Hunt process on $S$,
for each Borel subset $N$ of $S$ and each $x\in S_{\Delta}$,
$P_{x}(\{X_{t} \mid t \in (0, \infty)\} \subseteq S_{\Delta} \setminus N) = 1$ if and only if
$P_{x}(\{X_{t}, X_{t-} \mid t \in (0, \infty)\} \subseteq S_{\Delta} \setminus N) = 1$, where
$X_{t-}(\omega) \coloneqq \lim_{s \uparrow t}X_{s}(\omega) \in S_{\Delta}$ for $(t,\omega) \in (0,\infty) \times \Omega$.)
We assume that $X$ satisfies the following \emph{absolute continuity condition} \ref{2. item: absolute continuity} (with respect to $m$).
\begin{enumerate} [label = \textup{(AC)}, leftmargin = *] 
  \item \label{2. item: absolute continuity}
    For all $x \in S$ and $t \in (0, \infty)$, the Borel measure $P_{x}(X_{t} \in dy)$ on $S$ is absolutely continuous with respect to $m(dy)$.
\end{enumerate}
Then, by \cite[Theorem~2]{Yan_88_A_formula},
there exists a unique Borel measurable function $p \colon (0, \infty) \times S \times S \to [0,\infty]$ satisfying, for any $s, t \in (0, \infty)$ and $x, y \in S$, 
$P_{x}(X_{t} \in dz) = p_{t}(x,z)\, m(dz)$ (as Borel measures on $S$), $p_{t}(x,y) = p_{t}(y,x)$, and $p_{t+s}(x, y) = \int_{S} p_{t}(x, z) p_{s}(z, y)\, m(dz)$.
The function $p$ is called the \emph{transition density} (or \emph{heat kernel}) of $X$ (with respect to $m$).
For convenience, we extend the domain of $p$ to $(0, \infty] \times S \times S_{\Delta}$ by setting 
\begin{equation} \label{2. eq: extended heat kernel}
  p_{t}(x, \Delta) 
  \coloneqq 
  1- \int_{S} p_{t}(x, y)\, m(dy)
  = 
  P_{x}(X_{t} = \Delta)
  \qquad \textrm{and} \qquad
  p_{\infty}(x, y) \coloneqq 1_{\{\Delta\}}(y)
\end{equation}
for $t \in (0, \infty)$, $x \in S$, and $y \in S_{\Delta}$,
and extend the measure $m$ to a Borel measure $m_{\Delta}$ on $S_{\Delta}$ 
by setting $m_{\Delta} \coloneqq m(\cdot \cap S) + \delta_{S_{\Delta},\Delta}$,
where $\delta_{S_{\Delta},\Delta}$ denotes the Dirac measure on $S_{\Delta}$ putting mass $1$ at $\Delta$.
It is then easy to check that, for any $t \in (0, \infty]$, $x \in S$, and Borel measurable function $f \colon S_{\Delta} \to [0, \infty]$
(that is not necessarily $0$ at $\Delta$),
\begin{equation} \label{2. eq: extended heat kernel property}
  E_{x}[f(X_{t})]
  = 
  \int_{S_{\Delta}} p_{t}(x,y) f(y)\, m_{\Delta}(dy).
\end{equation}
For each $\alpha \in (0, \infty)$,
the \emph{$\alpha$-potential density} $\resol_{\alpha} \colon S \times S \to [0, \infty]$ of $X$ is defined by setting
\begin{equation} \label{2. eq: potential density}
  \resol_{\alpha}(x,y)
  \coloneqq 
  \int_{0}^{\infty} e^{-\alpha t} p_{t}(x,y)\, dt,
\end{equation}
so that by Fubini's theorem we have, for any $\alpha,\beta \in (0,\infty)$ and $x,y \in S$,
\begin{equation} \label{2. eq: density resolvent equation}
\resol_{\alpha \wedge \beta}(x,y) = \resol_{\alpha \vee \beta}(x,y)
  + |\alpha - \beta| \int_{S} \resol_{\alpha}(x,z)\resol_{\beta}(z,y) \, m(dz).
\end{equation}
Given a Borel measure $\nu$ on $S$ and $\alpha \in (0, \infty)$,
we define $\resolop_{\alpha}\nu \colon S \to [0, \infty]$ by setting
\begin{equation} \label{2. eq: potential function}
  \resolop_{\alpha}\nu(x) 
  \coloneqq 
  \int_{S} \resol_{\alpha}(x,y)\, \nu(dy).
\end{equation}

Now, we introduce PCAFs and smooth measures in the strict sense below.

\begin{dfn} [{PCAF, \cite[p.~222]{Fukushima_Oshima_Takeda_11_Dirichlet}}] \label{2. dfn: PCAF}
  An $\filt_{*}$-adapted $[0, \infty]$-valued stochastic process $A = (A_{t})_{t \geq 0}$
  defined on $\Omega$ is called a \emph{positive continuous additive functional (PCAF)} of $X$ if 
  there exist a set $\Lambda \in \filt_{\infty}$, called a \emph{defining set} of $A$, 
  and a properly exceptional set $N \in \mathcal{B}(S)$ for $X$, called an \emph{exceptional set} of $A$,
  satisfying the following.
  \begin{enumerate} [label = \textup{(\roman*)}]
    \item It holds that $P_{x}(\Lambda) = 1$ for all $x \in S \setminus N$ and
      $\theta_{t}(\Lambda) \subseteq \Lambda$ for all $t \in [0, \infty)$.
    \item 
      For every $\omega \in \Lambda$,
      $[0,\infty) \ni t \mapsto A_{t}(\omega)$ is a $[0,\infty]$-valued continuous function
      with $A_{0}(\omega)=0$ such that for any $s,t \in [0,\infty)$,
      $A_{t}(\omega)<\infty$ if $t < \zeta(\omega)$,
      $A_{t}(\omega)=A_{\zeta(\omega)}(\omega)$ if $t \geq \zeta(\omega)$,
      and $A_{t+s}(\omega)=A_{t}(\omega)+A_{s}(\theta_{t}(\omega))$.
  \end{enumerate} 
  We set $A_{\infty} \coloneqq \sup_{t \in [0,\infty) \cap \mathbb{Q}} A_{t}$ for each such $A$.
  We say that two PCAFs $A = (A_{t})_{t \geq 0}$ and $B = (B_{t})_{t \geq 0}$ of $X$ are \emph{equivalent}
  if $\int_{S}P_{x}(A_{t} \not= B_{t}) \, m(dx) = 0$ for all $t \in (0, \infty)$;
  see \cite[Lemma A.3.2]{Chen_Fukushima_12_Symmetric} in this connection.
\end{dfn}

\begin{dfn}[{PCAF in the strict sense, \cite[pp.~235--236]{Fukushima_Oshima_Takeda_11_Dirichlet}}] \label{2. dfn: PCAF in the strict sense}
  A PCAF of $X$ is called a \emph{PCAF in the strict sense} of $X$ if it admits a defining set $\Lambda$ with $P_{x}(\Lambda) = 1$ 
  for all $x \in S$.
  In other words,
  it is a PCAF of $X$ such that the empty set $\emptyset$ can be taken as an exceptional set of it.
  We say that two PCAFs $A = (A_{t})_{t \geq 0}$ and $B = (B_{t})_{t \geq 0}$ in the strict sense of $X$ are \emph{equivalent}
  if $\int_{S}P_{x}(A_{t} \not= B_{t}) \, m(dx) = 0$ for all $t \in (0, \infty)$, which,
  by \cite[Proof of Lemma A.3.2]{Chen_Fukushima_12_Symmetric} combined with
  \ref{2. item: absolute continuity} and \cite[Theorem A.2.17(iii)]{Chen_Fukushima_12_Symmetric},
  holds if and only if there exists a set $\Lambda \in \filt_{\infty}$
  which is a defining set of both $A$ and $B$ and satisfies $P_{x}(\Lambda)=1$ for all $x \in S$
  and $A_{t}(\omega) = B_{t}(\omega)$ for all $(t, \omega) \in [0, \infty) \times \Lambda$.
  We write $\mathbf{A}_{c, 1}^{+}$ for the collection of all PCAFs in the strict sense of $X$.
\end{dfn}

The next definition requires one more piece of notation.
For each $E \subseteq S_{\Delta}$, we define the first exit time $\tau_{E} \colon \Omega \to [0, \infty]$ of $X$ from $E$ by setting
$\tau_{E}(\omega) \coloneqq \inf\{ t \in [0, \infty) \mid X_{t}(\omega) \notin E \}$ ($\inf\emptyset \coloneqq \infty$),
so that $\tau_{E}$ is an $\filt_{*}$-stopping time for any $E \in \mathcal{B}(S_{\Delta})$
by \cite[Theorem A.2.3]{Fukushima_Oshima_Takeda_11_Dirichlet} or \cite[Theorem A.1.19]{Chen_Fukushima_12_Symmetric}.

\begin{dfn} [{Smooth measure in the strict sense, \cite[p.~238]{Fukushima_Oshima_Takeda_11_Dirichlet}}] \label{2. dfn: smooth measures in the strict sense}
  We define $\smooth_{00}$ to be the collection of finite Borel measures $\mu$ on $S$ satisfying $\|\resolop_{1}\mu\|_{\infty} < \infty$.
  A Borel measure $\mu$ on $S$ is called a \emph{smooth measure in the strict sense} 
  if there exists a non-decreasing sequence $(S_{n})_{n \in \mathbb{N}}$ of Borel subsets of $S$
  such that $1_{S_{n}} \cdot \mu \in \smooth_{00}$ for every $n \in \mathbb{N}$ and $P_{x}(\lim_{n \to \infty} \tau_{S_{n}} \geq \zeta) = 1$ for each $x \in S$;
  note that then $S = \bigcup_{n \in \mathbb{N}}S_{n}$ since, for any $x \in S$,
  $P_{x}(\lim_{n \to \infty} \tau_{S_{n}} > 0) \geq P_{x}(\lim_{n \to \infty} \tau_{S_{n}} \geq \zeta) = 1$
  and hence $x \in \bigcup_{n \in \mathbb{N}}S_{n}$.
  We write $\smooth_{1}$ for the collection of smooth measures in the strict sense.
\end{dfn}

It is known that there is a one-to-one correspondence between the set of equivalence classes of
PCAFs of $X$ and a certain class of Borel measures on $S$ called smooth measures.
This correspondence is due to Revuz \cite{Revuz_70_Mesures} and referred to as the \emph{Revuz correspondence},
and the smooth measure $\mu_{A}$ corresponding under it to a given PCAF $A$ of $X$ is given by
\begin{equation} \label{2. eq: Revuz measure}
  \mu_{A}(E) 
  \coloneqq 
  \sup_{t \in (0,\infty)} \frac{1}{t} \int_{S} E_{x}\biggl[ \int_{0}^{t} 1_{E}(X_{s})\, dA_{s} \biggr] \, m(dx)
  =
  \lim_{t\downarrow 0} \frac{1}{t} \int_{S} E_{x}\biggl[ \int_{0}^{t} 1_{E}(X_{s})\, dA_{s} \biggr] \, m(dx)
\end{equation}
for every Borel subset $E$ of $S$ and called the \emph{Revuz measure} of $A$;
for details see also \cite[Subsection A.3.1 and Theorem 4.1.1]{Chen_Fukushima_12_Symmetric} and
\cite[Theorems 5.1.3 and 5.1.4]{Fukushima_Oshima_Takeda_11_Dirichlet}, which we follow for the terminology.
By \cite[Theorem 5.1.7]{Fukushima_Oshima_Takeda_11_Dirichlet}, restricting the Revuz correspondence gives
a one-to-one correspondence between the equivalence classes of PCAFs in the strict sense of $X$ and the
smooth measures in the strict sense, and we also have the following characterization of this correspondence.

\begin{lem} [{Revuz correspondence}] \label{2. lem: Revuz correspondence in the strict sense}
  Let $A = (A_{t})_{t \geq 0} \in \mathbf{A}_{c, 1}^{+}$,
  let $\mu$ be a $\sigma$-finite Borel measure on $S$, and
  consider the following condition \ref{2. item: Revuz correspondence}.
  \begin{enumerate} [label = \textup{(RC)}, leftmargin = *]
    \item \label{2. item: Revuz correspondence}
      For any $\alpha \in (0, \infty)$, $x \in S$, and Borel measurable function $f \colon S \to [0, \infty]$,
      \begin{equation} \label{2. eq: Revuz correspondence}
        E_{x}\biggl[
          \int_{0}^{\infty} e^{-\alpha t} f(X_{t})\, dA_{t}
        \biggr]
        = 
        \int_{S} \resol_{\alpha}(x,y) f(y)\, \mu(dy).
      \end{equation}
  \end{enumerate}
  Then, $\mu$ is the Revuz measure of $A$ if and only if \ref{2. item: Revuz correspondence} holds.
\end{lem}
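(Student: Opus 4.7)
\textit{Forward direction.} Suppose $\mu$ is the Revuz measure $\mu_{A}$ of $A$. I would first establish the $m$-integrated form of \eqref{2. eq: Revuz correspondence}: for every $\alpha \in (0,\infty)$, every $h \in C_{c}(S)$ with $h \geq 0$, and every bounded Borel $f \colon S \to [0,\infty)$,
\[
  \int_{S} h(x)\, E_{x}\biggl[\int_{0}^{\infty} e^{-\alpha t} f(X_{t})\,dA_{t}\biggr] m(dx) \;=\; \int_{S} R_{\alpha}h(y)\,f(y)\,\mu_{A}(dy),
\]
where $R_{\alpha}h(y) := \int_{S} r_{\alpha}(y,x) h(x)\,m(dx)$. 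This should follow from \eqref{2. eq: Revuz measure} by a dyadic approximation: partition $[0,\infty)$ into intervals $[n\epsilon,(n+1)\epsilon)$, apply the Markov property at each $n\epsilon$ together with the additivity $A_{(n+1)\epsilon}(\omega)-A_{n\epsilon}(\omega) = A_{\epsilon}(\theta_{n\epsilon}\omega)$, use the $m$-symmetry of $p_{t}$ to shift $h$ across $P_{n\epsilon}$, and send $\epsilon \downarrow 0$. Because $r_{\alpha}$ is $m$-symmetric, the displayed identity is equivalent to $E_{x}[\int_{0}^{\infty} e^{-\alpha t} f(X_{t})\,dA_{t}] = \int_{S} r_{\alpha}(x,y)f(y)\,\mu_{A}(dy)$ for $m$-a.e.\ $x \in S$. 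To upgrade to every $x \in S$, I would observe that both sides, as functions of $x$, are $\alpha$-excessive: the left-hand side because $A \in \pcaf$ has empty exceptional set (so the Markov property applies pointwise), and the right-hand side because $r_{\alpha}(\cdot,y)$ is an $\alpha$-potential for each $y$. Condition \ref{2. item: absolute continuity} then forces any two $\alpha$-excessive functions that agree $m$-a.e.\ to agree everywhere on $S$, giving \eqref{2. eq: Revuz correspondence} pointwise.

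\textit{Converse direction.} Suppose now that $\mu$ is a $\sigma$-finite Borel measure on $S$ satisfying \eqref{2. eq: Revuz correspondence}. By the forward direction applied to $\mu_{A}$, we have $\int_{S} r_{\alpha}(x,y) f(y)\,\mu(dy) = \int_{S} r_{\alpha}(x,y) f(y)\,\mu_{A}(dy)$ for all $\alpha \in (0,\infty)$, $x \in S$, and bounded Borel $f \geq 0$. Integrating against $h(x)\,m(dx)$ with $h \in C_{c}(S)$, $h \geq 0$, and using the $m$-symmetry of $r_{\alpha}$ yields
\[
 \int_{S} R_{\alpha}h(y)\,f(y)\,\mu(dy) \;=\; \int_{S} R_{\alpha}h(y)\,f(y)\,\mu_{A}(dy).
\]
Taking $f = 1_{E}$ for a Borel $E \subseteq S$ with $\mu(E)+\mu_{A}(E) < \infty$, multiplying by $\alpha$, and letting $\alpha \to \infty$, I would use the representation $\alpha R_{\alpha}h(y) = E_{y}[\int_{0}^{\infty} e^{-s} h(X_{s/\alpha})\,ds]$: right-continuity of paths of $X$ and bounded convergence give $\alpha R_{\alpha}h(y) \to h(y)$ pointwise on $S$ for $h \in C_{c}(S)$, and $\alpha R_{\alpha}h$ is uniformly bounded by $\|h\|_{\infty}$. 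Dominated convergence applied to the finite measures $1_{E}\mu$ and $1_{E}\mu_{A}$ then yields $\int_{E} h\,d\mu = \int_{E} h\,d\mu_{A}$ for all $h \in C_{c}^{+}(S)$, so Riesz representation gives $1_{E}\mu = 1_{E}\mu_{A}$. Exhausting $S$ by such $E$ (possible by the $\sigma$-finiteness of both $\mu$ and $\mu_{A}$) yields $\mu = \mu_{A}$.

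\textit{Main obstacle.} The subtlest step is the upgrade from the $m$-a.e.\ identity to pointwise identity in the forward direction; this is where the strict-sense hypothesis $A \in \pcaf$ combined with \ref{2. item: absolute continuity} is essential, since without them the relevant $\alpha$-excessive functions would only be defined off an exceptional set and the pointwise equality for every $x \in S$ would fail. The converse is essentially a standard separation-of-measures argument via $\alpha R_{\alpha} h \to h$ once the forward direction is in hand.
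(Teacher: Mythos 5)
Your proposal is correct in substance, but it routes both implications differently from the paper, so a comparison is in order. For the existence direction ($\mu=\mu_{A}$ implies \ref{2. item: Revuz correspondence}), the paper simply cites \cite[Th\'{e}or\`{e}me~V.5]{Revuz_70_Mesures} (with the Kajino--Murugan argument as an alternative), whereas you sketch a from-scratch proof: first the $m$-integrated potential identity, then an upgrade from $m$-a.e.\ to everywhere via $\alpha$-excessivity of both sides of \eqref{2. eq: Revuz correspondence} together with \ref{2. item: absolute continuity}. That upgrade is exactly right and correctly isolates where the strict-sense hypothesis enters; be aware, though, that the dyadic-approximation step producing the $m$-integrated identity from \eqref{2. eq: Revuz measure} is the real content of the Revuz correspondence theorem (cf.\ \cite[Theorem 5.1.3]{Fukushima_Oshima_Takeda_11_Dirichlet}) and is only asserted, not carried out, in your write-up --- which is defensible, since the paper outsources precisely this to a citation. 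For the uniqueness direction (\ref{2. item: Revuz correspondence} implies $\mu=\mu_{A}$), the paper gives a short self-contained argument: multiply \eqref{2. eq: Revuz correspondence} by $\alpha$, integrate in $m(dx)$ over $S$, and let $\alpha\to\infty$; the left-hand side tends to $\int_{S} f\,d\mu_{A}$ by \cite[Theorem A.3.5(iv)]{Chen_Fukushima_12_Symmetric}, and the right-hand side tends to $\int_{S} f\,d\mu$ because $\alpha\int_{S}\resol_{\alpha}(x,y)\,m(dx)=\int_{0}^{\infty}e^{-s}P_{y}(s/\alpha<\zeta)\,ds\uparrow 1$. You instead first invoke the existence direction to equate the $\alpha$-potentials of $\mu$ and $\mu_{A}$ and then separate the two measures via $\alpha \resolop_{\alpha}(h\cdot m)\to h$ for $h\in C_{c}(S)$, dominated convergence on sets of finite $\mu$- and $\mu_{A}$-measure, and Riesz representation; this is valid (the common exhaustion exists since both measures are $\sigma$-finite, $\mu_{A}$ being smooth in the strict sense), but it makes uniqueness logically dependent on existence, which the paper's argument avoids. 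In short: the paper's route buys brevity and independence of the two implications, while yours buys a more self-contained picture at the cost of leaving the hardest computation (the dyadic/Markov-property step) at the level of a sketch.
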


\begin{proof}
  Let $\mu_{A}$ denote the Revuz measure of $A$.
  Note that by the symmetry of $p_{t}$ and Fubini's theorem we have, for any $\alpha \in (0,\infty)$ and $y \in S$,
  $\alpha \int_{S} \resol_{\alpha}(x,y) \, m(dx) = \int_{0}^{\infty} e^{-s} P_{y}( s/\alpha < \zeta ) \, ds$,
  which is non-decreasing in $\alpha$ and converges to $P_{y}( 0 < \zeta ) = 1$ as $\alpha \to \infty$
  by the monotone convergence theorem. Therefore $\alpha$ times the $m(dx)$-integrals on $S$
  of the left- and right-hand sides of \eqref{2. eq: Revuz correspondence} converge as $\alpha \to \infty$,
  respectively, to $\int_{S} f \, d\mu_{A}$ by \cite[Theorem A.3.5(iv)]{Chen_Fukushima_12_Symmetric}
  and to $\int_{S} f \, d\mu$ by Fubini's and the monotone convergence theorems,
  whence \ref{2. item: Revuz correspondence} implies $\mu = \mu_{A}$.
  Conversely, if $\mu = \mu_{A}$, then \ref{2. item: Revuz correspondence} holds
  by \cite[Th\'{e}or\`{e}me~V.5]{Revuz_70_Mesures} (or alternatively one can verify
  \ref{2. item: Revuz correspondence} by following \cite[Proof of Proposition 2.32]{Kajino_Murugan_pre_Heat}).
\end{proof}

\begin{rem} \label{2. rem: PCAF in the strict sense outside defining set}
  Let $A=(A_{t})_{t \geq 0}$ be a PCAF in the strict sense of $X$ with defining set $\Lambda$.
  Then, noting that $\{ \zeta = 0 \} = \{ X_{0} = \Delta \}$ satisfies either
  $\{ \zeta = 0 \} \subseteq \Omega_{0}$ or $\{ \zeta = 0 \} \cap \Omega_{0} = \emptyset$
  for any $\Omega_{0} \in \filt_{\infty}$, we easily see that $\Lambda \in \filt_{0}$,
  and thus that $( A_{t} 1_{\Lambda} )_{t \geq 0}$ is a PCAF in the strict sense of $X$
  equivalent to $A$ with defining set $\Lambda \cup \{\zeta = 0\} \in \filt_{0}$.
  In view of this observation and the Revuz correspondence described just before
  Lemma \ref{2. lem: Revuz correspondence in the strict sense},
  we may and do assume without loss of generality that \emph{every PCAF $A=(A_{t})_{t \geq 0}$
  in the strict sense of $X$ with defining set $\Lambda$ considered in the rest of this paper
  satisfies $A_{t}(\omega)=0$ for any $(t,\omega)\in[0,\infty)\times(\Omega\setminus\Lambda)$
  and $\{ \zeta=0 \} \subseteq \Lambda$}.
\end{rem}

The following fact will be used in the proof of Proposition \ref{prop: improved kac formula} below. 

\begin{lem} [{\cite[Exercise A.3.3 and Theorem A.3.5(iii)]{Chen_Fukushima_12_Symmetric}}]\label{2. lem: restriction of smoothe measures}
  Let $A=(A_{t})_{t \geq 0}$ be a PCAF in the strict sense of $X$,
  let $\mu$ be the Revuz measure of $A$, and let $E \in \mathcal{B}(S)$.
  Then, the process $B = (B_{t})_{t \geq 0}$ defined by setting $B_{t} \coloneqq \int_{0}^{t} 1_{E}(X_{s})\, dA_{s}$
  is a PCAF in the strict sense of $X$ with Revuz measure $1_{E} \cdot \mu$. 
\end{lem}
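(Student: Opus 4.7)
The plan is to decompose the proof into two parts: first verify directly from Definitions~\ref{2. dfn: PCAF} and~\ref{2. dfn: PCAF in the strict sense} that $B=(B_{t})_{t\geq 0}$ is a PCAF in the strict sense of $X$, and then identify its Revuz measure using the characterization~\ref{2. item: Revuz correspondence} in Lemma~\ref{2. lem: Revuz correspondence in the strict sense}.

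For the first part, let $\Lambda \in \filt_{\infty}$ be a defining set of $A$ with $P_{x}(\Lambda)=1$ for all $x \in S$, and I will show that the same $\Lambda$ serves as a defining set of $B$. Adaptedness of $B$ to $\filt_{*}$ is immediate from the progressive measurability of $(s,\omega)\mapsto 1_{E}(X_{s}(\omega))$. For each $\omega\in\Lambda$, the integrator $t\mapsto A_{t}(\omega)$ is continuous, non-decreasing, finite on $[0,\zeta(\omega))$ and constant on $[\zeta(\omega),\infty)$, so $B_{t}(\omega)=\int_{0}^{t} 1_{E}(X_{s}(\omega))\,dA_{s}(\omega)$ is a well-defined Lebesgue--Stieltjes integral that inherits all of these properties (with $B_{0}(\omega)=0$ and $B_{t}(\omega)\leq A_{t}(\omega)$). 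The only non-trivial point is the additivity $B_{t+s}=B_{t}+B_{s}\circ\theta_{t}$ on $\Lambda$, which follows from the substitution $u=v+t$ combined with $X_{v+t}=X_{v}\circ\theta_{t}$ and the identity $dA_{v+t}(\omega)=dA_{v}(\theta_{t}\omega)$ on $(0,\infty)$ that is implied by $A_{v+t}(\omega)=A_{t}(\omega)+A_{v}(\theta_{t}\omega)$; namely,
\begin{equation}
B_{t+s}(\omega)-B_{t}(\omega)=\int_{t}^{t+s} 1_{E}(X_{u}(\omega))\,dA_{u}(\omega)=\int_{0}^{s} 1_{E}(X_{v}(\theta_{t}\omega))\,dA_{v}(\theta_{t}\omega)=B_{s}(\theta_{t}\omega).
\end{equation}

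For the second part, I apply Lemma~\ref{2. lem: Revuz correspondence in the strict sense} to the $\sigma$-finite Borel measure $1_{E}\cdot\mu$. For any $\alpha\in(0,\infty)$, $x\in S$, and Borel measurable $f\colon S\to[0,\infty]$, the Lebesgue--Stieltjes integral against $dB$ equals that against $1_{E}(X_{\cdot})\,dA$, so
\begin{equation}
E_{x}\biggl[\int_{0}^{\infty} e^{-\alpha t} f(X_{t})\,dB_{t}\biggr]
= E_{x}\biggl[\int_{0}^{\infty} e^{-\alpha t} (f 1_{E})(X_{t})\,dA_{t}\biggr]
= \int_{S} \resol_{\alpha}(x,y) f(y)\,(1_{E}\cdot\mu)(dy),
\end{equation}
where the second equality is \eqref{2. eq: Revuz correspondence} applied to $A$ with integrand $f 1_{E}$. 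Thus~\ref{2. item: Revuz correspondence} holds for $B$ and $1_{E}\cdot\mu$, whence Lemma~\ref{2. lem: Revuz correspondence in the strict sense} identifies $1_{E}\cdot\mu$ as the Revuz measure of $B$.

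The main obstacle is the additivity check, for which one invokes the change-of-variable formula \cite[Chapter V, Lemma 2.2]{Blumenthal_Getoor_68_Markov} already cited in the introduction; the remaining verifications are routine consequences of basic properties of Lebesgue--Stieltjes integration with bounded non-negative integrands. I would also note that $B$ automatically respects the normalization convention of Remark~\ref{2. rem: PCAF in the strict sense outside defining set}, since $B_{t}(\omega)=0$ for every $(t,\omega)\in[0,\infty)\times(\Omega\setminus\Lambda)$ by the corresponding property already enforced for $A$.
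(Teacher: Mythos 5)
Your proof is correct. Note, however, that the paper does not actually prove this lemma: it is stated as a citation to \cite[Exercise A.3.3 and Theorem A.3.5(iii)]{Chen_Fukushima_12_Symmetric}, so there is no in-paper argument to match yours against. Your direct verification is a reasonable self-contained substitute. The first half (that $B$ is a PCAF in the strict sense with the same defining set $\Lambda$ as $A$) is routine and your treatment of the additivity via the image of the Lebesgue--Stieltjes measure under $u = v + t$ is fine; the only point worth making explicit is that $\theta_{t}(\Lambda) \subseteq \Lambda$ is exactly what guarantees $B_{s}(\theta_{t}\omega)$ is meaningful for $\omega \in \Lambda$, and this is inherited from $A$. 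The second half cleanly identifies the Revuz measure through the characterization \ref{2. item: Revuz correspondence} of Lemma~\ref{2. lem: Revuz correspondence in the strict sense}, using $dB_{t} = 1_{E}(X_{t})\, dA_{t}$ and the $\sigma$-finiteness of $1_{E}\cdot\mu$ (inherited from $\mu \in \smooth_{1}$); there is no circularity, since Lemma~\ref{2. lem: Revuz correspondence in the strict sense} is established independently of this lemma. The one caveat is that your route leans on Lemma~\ref{2. lem: Revuz correspondence in the strict sense} and hence on the absolute continuity condition \ref{2. item: absolute continuity}, whereas the cited results in \cite{Chen_Fukushima_12_Symmetric} identify the Revuz measure of $B$ directly from the defining formula \eqref{2. eq: Revuz measure} and so hold without \ref{2. item: absolute continuity}; within the setting of this paper the two are equivalent, but the citation buys slightly more generality.
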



\subsection{Main results} \label{sec: main results}

In this subsection, we state our main theorems, Theorems \ref{2. thm: main result} and \ref{2. thm: generalized Kac's formula},
and provide two corollaries of them, Corollaries \ref{2. cor: applications} and \ref{2. cor: finite exponential moment}.
The proofs of Theorems \ref{2. thm: main result} and \ref{2. thm: generalized Kac's formula}
are given later in Section \ref{sec: Proofs of the main results}.
We continue to assume the setting specified in Subsection \ref{sec: PCAFs and smooth measures}.
For measurable spaces $(Y,\mathcal{A})$ and $(Z,\mathcal{B})$, we write
$\mathcal{A} \otimes \mathcal{B}$ for the product $\sigma$-algebra of $\mathcal{A}$ and $\mathcal{B}$ in $Y \times Z$.

\begin{thm} \label{2. thm: main result}
  Let $A=(A_{t})_{t \geq 0}$ be a PCAF in the strict sense of $X$ and
  let $\mu$ be the Revuz measure of $A$.
  Let $F \colon (0,\infty) \times \Omega \to [0,\infty]$ be 
  $\mathcal{B}((0,\infty)) \otimes \filt_{\infty}$-measurable, set
  $F_{t}(\omega) \coloneqq F(t, \omega)$ for $(t, \omega) \in (0,\infty) \times \Omega$,
  and assume that the following conditions are satisfied:
  \begin{enumerate} [label = \textup{(\roman*)}]
    \item \label{2. thm item: main result condition-1} there exists $\Lambda \in \filt_{\infty}$ with $P_{x}(\Lambda) = 1$ for all $x \in S$ such that
      the function $(0,\infty) \times \Omega \ni (t, \omega) \mapsto 1_{\Lambda}(\omega) F_{t} \circ \theta_{t}(\omega) = 1_{\Lambda}(\omega) F_{t}(\theta_{t}\omega)$ is 
      $\mathcal{B}((0,\infty)) \otimes \filt_{\infty}$-measurable;
    \item \label{2. thm item: main result condition-2}
      the function $(0,\infty) \times S \ni (t, x) \mapsto E_{x}[F_{t}]$ is Borel measurable.
  \end{enumerate}
  Then, for any $t \in (0, \infty]$ and $x \in S$,
  \begin{equation} \label{2. eq: main result}
    E_{x}\biggl[ \int_{0}^{t} F_{s} \circ \theta_{s}\, dA_{s} \biggr]
    = 
    \int_{0}^{t} \int_{S} p_{s}(x, y) E_{y}[F_{s}]\, \mu(dy)\, ds.
  \end{equation}
\end{thm}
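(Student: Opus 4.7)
The plan is to lift Lemma \ref{2. lem: Revuz correspondence in the strict sense} to a time-dependent Revuz identity for deterministic integrands of the form $\phi(s,X_s)$, and then to collapse the path-dependent integrand $F_s \circ \theta_s$ onto such a deterministic form by invoking the strong Markov property at the right-continuous inverse of $A$, the bridge being a Lebesgue--Stieltjes change of variable. This is the strategy anticipated in the Introduction.

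First, I would establish the auxiliary identity
\begin{equation}\label{eq: plan time-dep Revuz}
E_x\biggl[ \int_0^t \phi(s,X_s) \, dA_s \biggr] = \int_0^t \int_S p_s(x,y) \phi(s,y) \, \mu(dy) \, ds
\end{equation}
for every Borel $\phi : (0,\infty) \times S \to [0,\infty]$, every $t \in (0,\infty]$, and every $x \in S$. Expanding $\resol_{\alpha}$ via \eqref{2. eq: potential density} in \eqref{2. eq: Revuz correspondence} and applying Fubini reveals that the Laplace transforms in $t$ of the two sides of \eqref{eq: plan time-dep Revuz} coincide when $\phi(s,y)=f(y)$ for a bounded Borel $f$, after localizing by $1_{S_n}\cdot\mu \in \smooth_{00}$ (via Lemma \ref{2. lem: restriction of smoothe measures}) to secure finiteness. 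Since both sides are continuous nondecreasing in $t$, uniqueness of the Laplace transform gives \eqref{eq: plan time-dep Revuz} in this case, and a standard monotone class argument --- products $\phi(s,y)=h(s)f(y)$ first, general bounded Borel $\phi$ next, unbounded $\phi$ last via monotone convergence --- completes this step.

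Next, let $T_u \coloneqq \inf\{s>0 \mid A_s > u\}$ be the right-continuous inverse of $A$, an $\filt_*$-stopping time for each $u \geq 0$ by the continuity and adaptedness of $A$. Since $A$ is continuous with $A_0=0$ on its defining set, the Lebesgue--Stieltjes change of variable \cite[Chapter~V, Lemma 2.2]{Blumenthal_Getoor_68_Markov} yields pathwise
\begin{equation}\label{eq: plan COV}
\int_0^t G(s,\omega) \, dA_s(\omega) = \int_0^{A_t(\omega)} G(T_u(\omega),\omega) \, du
\end{equation}
for every jointly Borel $G:(0,\infty) \times \Omega \to [0,\infty]$, making use of $\{T_u \leq t\}=\{u \leq A_t\}$. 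I would apply this with $G(s,\omega)=F_s \circ \theta_s(\omega)$ (jointly measurable by (i)) and then invoke the strong Markov property at each $T_u$, extended from product-form to jointly measurable integrands by monotone class, to obtain on $\{T_u<\infty\}$
\begin{equation*}
E_x\bigl[ F_{T_u} \circ \theta_{T_u} \,\big|\, \filt_{T_u} \bigr] = \phi(T_u, X_{T_u}), \qquad \phi(s,y) \coloneqq E_y[F_s],
\end{equation*}
where $\phi$ is Borel by (ii). Combining via Fubini--Tonelli (nonnegativity) and $\{T_u \leq t\} \in \filt_{T_u}$,
\begin{align*}
E_x\biggl[ \int_0^t F_s \circ \theta_s \, dA_s \biggr]
&= \int_0^\infty E_x\bigl[ 1_{\{T_u \leq t\}} F_{T_u} \circ \theta_{T_u} \bigr] \, du \\
&= \int_0^\infty E_x\bigl[ 1_{\{T_u \leq t\}} \phi(T_u, X_{T_u}) \bigr] \, du = E_x\biggl[ \int_0^t \phi(s, X_s) \, dA_s \biggr],
\end{align*}
the outer equalities being \eqref{eq: plan COV} applied forwards and backwards. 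Invoking \eqref{eq: plan time-dep Revuz} on $\phi$ now yields \eqref{2. eq: main result}.

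The hardest part, in my view, is the joint-measurability upgrade of the strong Markov property required for $F_{T_u} \circ \theta_{T_u}$: one must carefully separate the outer $\omega$ (which both fixes the stopping time $T_u(\omega)$ and selects the starting point $X_{T_u}(\omega)$) from the inner path over which the conditional expectation integrates. This is handled by a monotone class argument that starts from product-form $F_s(\omega)=h(s)H(\omega)$, in which case the claim reduces to the classical strong Markov property. Remaining technicalities --- $\sigma$-finiteness of $\mu$, possibly infinite values of $F$, and the case $t=\infty$ --- are routine via $\smooth_{00}$-localization, monotone convergence, and letting $t \uparrow \infty$, respectively.
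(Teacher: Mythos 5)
Your proposal is correct and follows essentially the same route as the paper: an auxiliary time-dependent Kac formula $E_{x}[\int_{0}^{t}\phi(s,X_{s})\,dA_{s}]=\int_{0}^{t}\int_{S}p_{s}(x,y)\phi(s,y)\,\mu(dy)\,ds$ obtained from \ref{2. item: Revuz correspondence} via Laplace transforms, $\smooth_{00}$-localization and a monotone class argument, followed by the change of variables along the right-continuous inverse of $A$ using \cite[Chapter~V, Lemma 2.2]{Blumenthal_Getoor_68_Markov}, the jointly measurable form of the strong Markov property at the stopping times $T_{u}$ (the paper cites \cite[Chapter I, Exercise 8.16]{Blumenthal_Getoor_68_Markov} for exactly the upgrade you flag as the hardest step), and Fubini to collapse back to the deterministic integrand $\phi(s,X_{s})$.
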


\begin{rem} \label{2. rem: comment on F-P results}
  As mentioned in Section \ref{sec: Intro}, a formula similar to \eqref{2. eq: main result}
  is stated in \cite[Equation (21)]{Fitzsimmons_Pitman_99_Kac},
  which deals with expectations of Lebesgue--Stieltjes integrals with respect to PCAFs up to Markov killing times of $X$
  rather than up to deterministic times $t$ as in \eqref{2. eq: main result}.
  As stated in Remark \ref{2. rem: extension to part process} below,
  it is possible to extend our results to the case where $t$ is replaced by
  the first exit time $\tau_{D}$ from a non-empty open subset $D$ of $S$,
  one of the most typical examples of Markov killing times of $X$.
  We emphasize here that our proof of Theorem \ref{2. thm: main result} is quite elementary
  and based only on the strong Markov property of $X$ and
  a change-of-variable formula for Lebesgue--Stieltjes integrals,
  whereas it is indicated in \cite{Fitzsimmons_Pitman_99_Kac} that
  the (omitted) proof of \cite[Equation (21)]{Fitzsimmons_Pitman_99_Kac}
  uses the Ray--Knight compactification and is therefore technically demanding.
\end{rem}

By using Theorem \ref{2. thm: main result},
we further obtain the following generalizations of Kac's moment formula \eqref{1. eq: original Kac's formula}
to various random variables involving general PCAFs in the strict sense of $X$.

\begin{thm} \label{2. thm: generalized Kac's formula}
  Let $k \in \mathbb{N}$, and
  for each $i \in \{1,2,\ldots,k\}$, 
  let $A^{(i)} = (A^{(i)}_{t})_{t \geq 0}$ be a PCAF in the strict sense of $X$ and let $\mu_{i}$ be the Revuz measure of $A^{(i)}$.
  Then, for any $t \in (0, \infty]$, $x \in S$, and Borel measurable function $f \colon S_{\Delta} \to [0, \infty]$,
  \begin{align}
    \lefteqn{E_{x}\Biggl[ f(X_{t}) \prod_{i=1}^{k} A^{(i)}_{t} \Biggr]}\\
    &= 
    \sum_{\pi \in \mathfrak{S}_{k}}
    \int_{0}^{t} dt_{1} \int_{t_{1}}^{t} dt_{2} \cdots \int_{t_{k-1}}^{t} dt_{k} 
    \int_{S} \mu_{\pi_{1}}(dy_{1}) \int_{S} \mu_{\pi_{2}}(dy_{2})
    \cdots \int_{S} \mu_{\pi_{k}}(dy_{k}) \int_{S_{\Delta}} m_{\Delta}(dz)\\
    &\qquad 
    p_{t_{1}}(x, y_{1}) p_{t_{2}-t_{1}}(y_{1}, y_{2})
    \cdots p_{t_{k}-t_{k-1}}(y_{k-1}, y_{k}) p_{t-t_{k}}(y_{k}, z) f(z), \label{2. eq: generalized Kac's formula}
  \end{align}
  where $\mathfrak{S}_{k}$ denotes the set of all the bijections $\pi = (\pi_{i})_{i=1}^{k}$ from $\{1,2,\ldots,k\}$ to itself.
\end{thm}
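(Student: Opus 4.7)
The plan is to prove the theorem by induction on $k$, after first reducing, via a symmetric expansion of the product, to the summand for the identity permutation. For $\omega$ in the common defining set $\Lambda_{1} \cap \cdots \cap \Lambda_{k}$ of $A^{(1)}, \ldots, A^{(k)}$, each $A^{(i)}(\omega)$ is continuous and non-decreasing, so $dA^{(i)}(\omega)$ is a non-atomic Radon measure on $[0,t]$; the product measure $dA^{(1)}_{s_{1}} \cdots dA^{(k)}_{s_{k}}$ on $[0,t]^{k}$ thus charges no diagonal $\{s_{i} = s_{j}\}$, and iterating integration by parts for continuous finite-variation functions (the $k=2$ version being $A^{(1)}_{t} A^{(2)}_{t} = \int_{0}^{t} A^{(2)}_{s} \, dA^{(1)}_{s} + \int_{0}^{t} A^{(1)}_{s} \, dA^{(2)}_{s}$) yields
\[
  \prod_{i=1}^{k} A^{(i)}_{t}
  =
  \sum_{\pi \in \mathfrak{S}_{k}} \int_{\Sigma_{k}(t)} dA^{(\pi_{1})}_{s_{1}} \cdots dA^{(\pi_{k})}_{s_{k}},
  \qquad
  \Sigma_{k}(t) \coloneqq \{0 < s_{1} < \cdots < s_{k} < t\}.
\]
Relabeling $(A^{(i)}, \mu_{i}) \mapsto (A^{(\pi_{i})}, \mu_{\pi_{i}})$ then reduces the theorem to showing that the expectation of $f(X_{t})$ times the identity-ordered iterated integral equals the $\pi = \mathrm{id}$ summand of \eqref{2. eq: generalized Kac's formula}.

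For $k = 1$, I would set $F_{s}(\omega) \coloneqq f(X_{t-s}(\omega)) 1_{(0,t)}(s)$, so that $F_{s} \circ \theta_{s} = f(X_{t}) 1_{(0,t)}(s)$ and hence $\int_{0}^{t} F_{s} \circ \theta_{s} \, dA^{(1)}_{s} = f(X_{t}) A^{(1)}_{t}$, while $E_{y}[F_{s}] = 1_{(0,t)}(s) \int_{S_{\Delta}} p_{t-s}(y,z) f(z) \, m_{\Delta}(dz)$ by \eqref{2. eq: extended heat kernel property}; substituting into Theorem \ref{2. thm: main result} gives the $k=1$ case. For the inductive step from $k-1$ to $k$, define
\[
  G_{r}(\omega) \coloneqq \int_{\Sigma_{k-1}(r)} dA^{(2)}_{u_{1}}(\omega) \cdots dA^{(k)}_{u_{k-1}}(\omega),
  \qquad
  F_{s}(\omega) \coloneqq f(X_{t-s}(\omega)) G_{t-s}(\omega) 1_{(0,t)}(s).
\]
The additivity $A^{(i)}_{s+u} - A^{(i)}_{s} = A^{(i)}_{u} \circ \theta_{s}$ from Definition \ref{2. dfn: PCAF}, combined with the change of variables $s_{j} = s + u_{j-1}$, gives $F_{s}(\theta_{s}\omega) = f(X_{t}(\omega)) \int_{s < s_{2} < \cdots < s_{k} < t} dA^{(2)}_{s_{2}}(\omega) \cdots dA^{(k)}_{s_{k}}(\omega)$, so $\int_{0}^{t} F_{s} \circ \theta_{s} \, dA^{(1)}_{s}$ is exactly $f(X_{t})$ times the $\pi = \mathrm{id}$ ordered iterated integral. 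Applying Theorem \ref{2. thm: main result}, then evaluating $E_{y}[F_{s}] = E_{y}[f(X_{t-s}) G_{t-s}]$ via the inductive hypothesis, and substituting $t_{j+1} \coloneqq s + u_{j}$ in the resulting iterated integral yields precisely the $\pi = \mathrm{id}$ summand of \eqref{2. eq: generalized Kac's formula}.

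The main obstacle will be verifying hypotheses \textup{(i)} and \textup{(ii)} of Theorem \ref{2. thm: main result} for the $F$ just constructed. Joint $\mathcal{B}((0,\infty)) \otimes \filt_{\infty}$-measurability of $(r, \omega) \mapsto G_{r}(\omega)$ should be proved inductively using the joint measurability of continuous $\filt_{*}$-adapted processes and a Fubini-type argument for nested Lebesgue--Stieltjes integrals; composing with shifts via the additivity noted above then gives (i). For (ii), Borel measurability of $(s, y) \mapsto E_{y}[F_{s}]$ follows from the joint Borel measurability of $(t, x, y) \mapsto p_{t}(x, y)$, the inductive formula for $E_{y}[G_{t-s}]$, and another Fubini argument. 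Finally, the case $t = \infty$ requires separate inspection: here $X_{\infty} = \Delta$, $f(X_{\infty}) = f(\Delta)$, and $p_{\infty}(y, z) = 1_{\{\Delta\}}(z)$ by \eqref{2. eq: extended heat kernel}, and the formula is seen to persist by monotone convergence as $t \uparrow \infty$ from the finite-$t$ case already established.
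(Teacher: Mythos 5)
Your proposal follows essentially the same route as the paper's proof: the same symmetric expansion of $f(X_{t})\prod_{i}A^{(i)}_{t}$ into ordered iterated Lebesgue--Stieltjes integrals indexed by permutations, the same induction on $k$ with $F_{s}$ built from $f(X_{t-s})$ times the $(k-1)$-fold ordered integral up to time $t-s$, the same use of additivity to identify $F_{s}\circ\theta_{s}$ as the tail of the ordered integral, and the same appeal to Theorem \ref{2. thm: main result} together with the induction hypothesis to verify hypothesis \textup{(ii)}. (For hypothesis \textup{(i)} the paper argues via right-continuity in $s$ and \cite[Proof of Lemma A.2.4]{Fukushima_Oshima_Takeda_11_Dirichlet} rather than your Fubini-type argument, but both work.) For $t\in(0,\infty)$ your argument is correct.

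The one step that fails as written is the treatment of $t=\infty$ ``by monotone convergence as $t\uparrow\infty$''. Since $X_{\infty}=\Delta$ identically while $X_{t}\in S$ for all finite $t$ on $\{\zeta=\infty\}$, one has $f(X_{t})\not\to f(X_{\infty})$ in general, and correspondingly $\int_{S_{\Delta}}p_{t-t_{k}}(y_{k},z)f(z)\,m_{\Delta}(dz)\not\to\int_{S_{\Delta}}p_{\infty}(y_{k},z)f(z)\,m_{\Delta}(dz)=f(\Delta)$. Concretely, for conservative $X$ and $f=1_{S}$ the finite-$t$ left-hand sides converge to $E_{x}\bigl[\prod_{i}A^{(i)}_{\infty}\bigr]$, whereas the asserted $t=\infty$ identity has left-hand side $1_{S}(\Delta)\,E_{x}\bigl[\prod_{i}A^{(i)}_{\infty}\bigr]=0$; the limit of the finite-$t$ identities is a different (also true) identity, not the $t=\infty$ one. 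The fix is immediate and is what the paper implicitly does: Theorem \ref{2. thm: main result} and Proposition \ref{prop: improved kac formula} are stated for $t\in(0,\infty]$, and with the conventions $X_{\infty}=\Delta$ and $p_{\infty}(y,z)=1_{\{\Delta\}}(z)$ your induction runs verbatim at $t=\infty$, where $F_{s}=f(\Delta)G_{\infty}$. Alternatively, observe that the $t=\infty$ statement is $f(\Delta)$ times the statement for $f\equiv 1$, and it is only the latter that follows by monotone convergence from the finite-$t$ case.
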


\begin{cor} \label{2. cor: applications}
  Let $A = (A_{t})_{t \geq 0}$ and $B = (B_{t})_{t \geq 0}$ be PCAFs in the strict sense of $X$,
  and let $\mu$ and $\nu$ be the Revuz measures of $A$ and $B$, respectively.
  Then, the following statements hold.
  \begin{enumerate} [label = \textup{(\roman*)}]
    \item  \label{2. cor item: moment formula 1}
      For any $t \in (0, \infty]$, $x \in S$, and Borel measurable function $f \colon S_{\Delta} \to [0, \infty]$,
      \begin{equation} \label{2. cor eq: moment formula 1}
        E_{x}[f(X_{t}) A_{t}]
        = 
        \int_{0}^{t} 
        \int_{S}
        p_{s}(x, y) 
        \int_{S_{\Delta}} p_{t-s}(y, z) f(z)\, m_{\Delta}(dz)\,
        \mu(dy)\, ds.
      \end{equation}
    \item  \label{2. cor item: moment formula 2}
      For any $t \in (0, \infty]$, $x \in S$, and $k \in \mathbb{N}$, 
      \begin{align}
        E_{x}[A_{t}^{k}]
        &= 
        k! 
        \int_{0}^{t} dt_{1} \int_{t_{1}}^{t} dt_{2} \cdots \int_{t_{k-1}}^{t} dt_{k} 
        \int_{S} \mu(dy_{1}) \int_{S} \mu(dy_{2}) \cdots \int_{S} \mu(dy_{k})\\
        &\qquad
        p_{t_{1}}(x, y_{1}) p_{t_{2}-t_{1}}(y_{1}, y_{2}) \cdots p_{t_{k}-t_{k-1}}(y_{k-1}, y_{k}). \label{2. cor eq: moment formula 2}
      \end{align}
    \item  \label{2. cor item: moment formula 3}
      For any $t \in (0, \infty]$ and $x \in S$, 
      \begin{align}
        E_{x}[A_{t} B_{t}]
        &= 
        \int_{0}^{t} dt_{1} \int_{t_{1}}^{t} dt_{2} 
        \int_{S} \mu(dy_{1}) \int_{S} \nu(dy_{2})\,
        p_{t_{1}}(x, y_{1}) p_{t_{2}-t_{1}}(y_{1}, y_{2}) \\
        &\qquad  
        + 
        \int_{0}^{t} dt_{1} \int_{t_{1}}^{t} dt_{2} 
        \int_{S} \nu(dy_{1}) \int_{S} \mu(dy_{2})\,
        p_{t_{1}}(x, y_{1}) p_{t_{2}-t_{1}}(y_{1}, y_{2}). \label{2. cor eq: moment formula 3}
      \end{align}
  \end{enumerate}
\end{cor}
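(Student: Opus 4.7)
The plan is to deduce each of (i), (ii), and (iii) as immediate specializations of Theorem \ref{2. thm: generalized Kac's formula}, together with a single normalization identity for the extended transition kernel.

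First, for (i), I would apply the theorem with $k=1$ and $A^{(1)}=A$. Since $\mathfrak{S}_{1}$ has a unique element, the resulting formula reduces exactly to \eqref{2. cor eq: moment formula 1} after the cosmetic relabelings $t_{1}\mapsto s$ and $y_{1}\mapsto y$; no further argument is needed.

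Next, for (ii), I would specialize with $A^{(i)}=A$ (so $\mu_{i}=\mu$) for every $i$ and with $f\equiv 1$ on $S_{\Delta}$. The key step is the normalization
\[
\int_{S_{\Delta}} p_{s}(y,z)\, m_{\Delta}(dz) = 1
\qquad \text{for every } s\in(0,\infty] \text{ and } y\in S,
\]
which is just \eqref{2. eq: extended heat kernel property} applied to the constant function $1$ on $S_{\Delta}$; alternatively, for $s\in(0,\infty)$ it follows directly from the defining equation \eqref{2. eq: extended heat kernel} for $p_{s}(y,\Delta)$, and for $s=\infty$ from $p_{\infty}(y,\cdot)=1_{\{\Delta\}}$ together with $m_{\Delta}(\{\Delta\})=1$. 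Using this identity to evaluate the innermost $z$-integral as $1$, and observing that every permutation in $\mathfrak{S}_{k}$ yields the same integrand (since all $\mu_{\pi_{i}}=\mu$), one obtains the factor $k!$ and hence \eqref{2. cor eq: moment formula 2}.

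Finally, for (iii), I would specialize with $k=2$, $A^{(1)}=A$, $A^{(2)}=B$ (so $\mu_{1}=\mu$ and $\mu_{2}=\nu$), and $f\equiv 1$ on $S_{\Delta}$. After the $z$-integral is evaluated as $1$ by the same normalization, the two elements of $\mathfrak{S}_{2}$, namely the identity and the transposition, give respectively $(\mu_{\pi_{1}},\mu_{\pi_{2}})=(\mu,\nu)$ and $(\nu,\mu)$, which reproduce the two summands on the right-hand side of \eqref{2. cor eq: moment formula 3}. There is no genuine obstacle in this derivation; the only technical point worth flagging is the normalization identity displayed above, which is immediate from the construction of the extended transition kernel and of the measure $m_{\Delta}$.
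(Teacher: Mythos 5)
Your proposal is correct and follows exactly the route the paper intends: the paper's proof consists of the single sentence that these are immediate consequences of Theorem \ref{2. thm: generalized Kac's formula}, and your specializations ($k=1$ for (i); all $A^{(i)}=A$, $f\equiv 1$ and the normalization $\int_{S_{\Delta}}p_{s}(y,z)\,m_{\Delta}(dz)=1$ for (ii); $k=2$ with the two permutations for (iii)) are precisely the omitted details. The normalization identity you flag is indeed immediate from \eqref{2. eq: extended heat kernel} and the definition of $m_{\Delta}$, so there is nothing to add.
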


\begin{proof}
  These are immediate consequences of Theorem \ref{2. thm: generalized Kac's formula}.
\end{proof}

\begin{rem} \label{2. rem: extension to part process}
  More generally, we can extend each of \eqref{2. eq: main result}, \eqref{2. eq: generalized Kac's formula},
  \eqref{2. cor eq: moment formula 1}, \eqref{2. cor eq: moment formula 2} and \eqref{2. cor eq: moment formula 3}
  to a similar formula for the random variable given by replacing $t$ with
  $t \wedge \tau_{D}$ (and requiring $f|_{S_{\Delta} \setminus D}$ to be constant)
  for any non-empty open subset $D$ of $S$. Namely, fix any such $D$, let
  $D_{\Delta} = D \cup \{ \Delta_{D} \}$ be the one-point compactification of $D$,
  set $m|_{D} \coloneqq m|_{\mathcal{B}(D)}$, $P_{\Delta_{D}} \coloneqq P_{\Delta}$,
  and for each $t \in [0, \infty]$ define $X^{D}_{t} \colon \Omega \to D_{\Delta}$
  by setting $X^{D}_{t}(\omega) \coloneqq X_{t}(\omega)$ for $\omega \in \{ t < \tau_{D} \}$
  and $X^{D}_{t}(\omega) \coloneqq \Delta_{D}$ for $\omega \in \{ t \geq \tau_{D} \}$.
  Then $X^{D} \coloneqq (\Omega, \filt_{\infty}, (X^{D}_{t})_{t \in [0, \infty]}, (P_{x})_{x \in D_{\Delta}})$,
  called the \emph{part process} of $X$ on $D$ (killed upon exiting $D$),
  is an $m|_{D}$-symmetric Hunt process on $D$ by \cite[Theorem A.2.10 and Lemma 4.1.3]{Fukushima_Oshima_Takeda_11_Dirichlet}
  (see also \cite[Exercise 3.3.7(ii), (3.3.4) and Exercise 4.1.9(i)]{Chen_Fukushima_12_Symmetric}),
  the Dirichlet form of $X^{D}$ is regular by \cite[Theorems 4.4.2 and 4.4.3]{Fukushima_Oshima_Takeda_11_Dirichlet},
  and $X^{D}$ satisfies \ref{2. item: absolute continuity} by \ref{2. item: absolute continuity} of $X$
  and hence has a unique transition density $p^{D}$ with respect to $m|_{D}$. We further set
  $p^{D}_{t}(x, \Delta_{D}) \coloneqq 1 - \int_{D} p^{D}_{t}(x, y)\, m(dy) = P_{x}( t \geq \tau_{D})$
  and $p^{D}_{\infty}(x, y) \coloneqq 1_{\{\Delta_{D}\}}(y)$
  for $t\in (0, \infty)$, $x \in D$, and $y \in D_{\Delta}$,
  and extend $m|_{D}$ to a Borel measure $(m|_{D})_{\Delta}$ on $D_{\Delta}$ by setting
  $(m|_{D})_{\Delta} \coloneqq m|_{D}(\cdot \cap D) + \delta_{D_{\Delta},\Delta_{D}}$.
  Then we have the following.
  \begin{equation} \label{2. eq: extension to part process}
  \begin{minipage}{410pt}\itshape
  Theorems \textup{\ref{2. thm: main result}}, \textup{\ref{2. thm: generalized Kac's formula}} and
  Corollary \textup{\ref{2. cor: applications}}, with $S,S_{\Delta},p,m_{\Delta}$
  replaced by $D,D_{\Delta},p^{D},(m|_{D})_{\Delta}$ and with $t,X$
  in the left-hand sides of \eqref{2. eq: main result}, \eqref{2. eq: generalized Kac's formula},
  \eqref{2. cor eq: moment formula 1}, \eqref{2. cor eq: moment formula 2} and \eqref{2. cor eq: moment formula 3}
  replaced by $t \wedge \tau_{D},X^{D}$, hold.
  \end{minipage}
  \end{equation}
  Indeed, Theorem \ref{2. thm: generalized Kac's formula} and Corollary \ref{2. cor: applications}
  (and Lemma \ref{2. lem: basic Kac formula} and Proposition \ref{prop: improved kac formula} below)
  are shown to extend in this form to general $D$ by applying them to $X^{D}$ on the basis of
  the following fact implied by \cite[Exercise 4.1.9 and Proposition 4.1.10]{Chen_Fukushima_12_Symmetric}.
  \begin{equation} \label{2. eq: PCAF for part process}
  \begin{minipage}{388pt}\itshape
  If $A=\{A_{t}\}_{t\geq 0}$ is a PCAF in the strict sense of $X$ with Revuz measure $\mu$,
  then $\{A_{t\wedge\tau_{D}}\}_{t\geq 0}$ is a PCAF in the strict sense of $X^{D}$
  with Revuz measure $\mu|_{D} \coloneqq \mu|_{\mathcal{B}(D)}$.
  \end{minipage}
  \end{equation}
  (To be precise, $\{A_{t\wedge\tau_{D}}\}_{t\geq 0}$ might not be adapted to the minimum augmented
  admissible filtration $\filt^{D}_{*}=\{\filt^{D}_{t}\}_{t\in[0,\infty]}$ of $X^{D}$ in $\Omega$ since $\filt^{D}_{*}$ involves
  smaller families of sets of probability zero than $\filt_{*}$, but $\{A_{t\wedge\tau_{D}}\}_{t\geq 0}$
  is still \emph{equivalent} to an ($\filt^{D}_{*}$-adapted) PCAF $A^{D}$ in the strict sense of $X^{D}$.
  Indeed, such $A^{D}$ can be obtained by applying to $A$ the limit construction of PCAFs of $X$ presented in
  \cite[Proofs of Chapter IV, Theorem 3.16 and Chapter~V, Theorem~2.1]{Blumenthal_Getoor_68_Markov}
  and restricting the approximating PCAFs to the stochastic interval $[0,\tau_{D}]$
  in taking their limits to recover $A$; in this connection see also \cite[Remark 4.16]{Noda_pre_STOM}.)
  Then the proof of Theorem \ref{2. thm: main result} given in
  Section \ref{sec: Proofs of the main results} below is easily seen to extend to general $D$
  by replacing $t,1_{S}$ with $t \wedge \tau_{D},1_{D}$ in \eqref{3. eq: main result proof}
  and using the above extension to $D$ of Proposition \ref{prop: improved kac formula}.
  (Note that this extension of Theorem \ref{2. thm: main result} is \emph{not} implied by
  a mere application of Theorem \ref{2. thm: main result} to $X^{D}$, since
  $\filt^{D}_{\infty} \subseteq \filt_{\tau_{D}} \subseteq \filt_{\infty}$
  and the latter inclusion is usually strict.)
\end{rem}

Following \cite{Kim_Kuwae_17_Analytic}, we define
$\smooth_{\mathrm{EK}} \coloneqq \{ \mu \in \smooth_{1} \mid \lim_{\alpha \to \infty} \|\resolop_{\alpha}\mu\|_{\infty} < 1 \}$,
and call $\smooth_{\mathrm{EK}}$ the \emph{extended Kato class}.
This class of smooth measures plays an important role in the study of Feynman--Kac semigroups (see, e.g., \cite{Getoor_99_Measure,Ying_97_Dirichlet}).
As a consequence of \eqref{2. cor eq: moment formula 2},
we can prove that any PCAF in the strict sense of $X$ with Revuz measure in $\smooth_{\mathrm{EK}}$ has uniformly bounded exponential moments at any finite time,
as follows.
Essentially the same result was proved in \cite{Getoor_99_Measure,Ying_97_Dirichlet},
but we give a detailed proof here since our setting and class $\smooth_{\mathrm{EK}}$ of smooth measures
are different from those in \cite{Getoor_99_Measure,Ying_97_Dirichlet}.

\begin{cor} \label{2. cor: finite exponential moment}
  Let $A = (A_{t})_{t \geq 0}$ be a PCAF in the strict sense of $X$ whose Revuz measure
  belongs to $\smooth_{\mathrm{EK}}$.
  Then, there exist $c_{1}, c_{2} \in (0,\infty)$ such that for any $t \in [0, \infty)$,
  \begin{equation}
    \sup_{x \in S}E_{x}\bigl[ e^{A_{t}} \bigr] \leq c_{1} e^{c_{2} t}.
  \end{equation}
\end{cor}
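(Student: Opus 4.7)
The plan is to Taylor-expand $e^{A_{t}}$, bound each moment $\phi_{k}(t,x) \coloneqq E_{x}[A_{t}^{k}]/k!$ using the moment formula \eqref{2. cor eq: moment formula 2}, and sum the resulting geometric series. Letting $\mu$ denote the Revuz measure of $A$, I would first derive, by singling out the innermost time variable $t_{1}$ in \eqref{2. cor eq: moment formula 2} and substituting $s_{i} = t_{i+1} - t_{1}$ for $i = 1, \ldots, k-1$, the convolution-type recursion
\begin{equation}
\phi_{k}(t,x) = \int_{0}^{t} \int_{S} p_{s}(x,y)\, \phi_{k-1}(t-s,y)\, \mu(dy)\, ds \qquad (k \in \mathbb{N}),
\end{equation}
with $\phi_{0} \equiv 1$; this is a straightforward application of Fubini's theorem.

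Next, using the resolvent equation \eqref{2. eq: density resolvent equation} with $\alpha < \beta$, one sees that $\resol_{\alpha}(x,y) \geq \resol_{\beta}(x,y)$ pointwise, so $\alpha \mapsto \|\resolop_{\alpha}\mu\|_{\infty}$ is non-increasing. Together with the assumption $\mu \in \smooth_{\mathrm{EK}}$, this lets me fix $\alpha \in (0,\infty)$ with $\rho \coloneqq \|\resolop_{\alpha}\mu\|_{\infty} < 1$. I would then prove by induction on $k$ that
\begin{equation}
\phi_{k}(t,x) \leq \rho^{k} e^{\alpha t} \qquad \text{for all } t \in [0,\infty) \text{ and } x \in S.
\end{equation}
The base case $k = 0$ is trivial, and for the inductive step the recursion and the inductive hypothesis give
\begin{align}
\phi_{k}(t,x)
&\leq \rho^{k-1} \int_{0}^{t} \int_{S} p_{s}(x,y)\, e^{\alpha(t-s)}\, \mu(dy)\, ds \\
&= \rho^{k-1} e^{\alpha t} \int_{0}^{t} e^{-\alpha s} \int_{S} p_{s}(x,y)\, \mu(dy)\, ds
\leq \rho^{k-1} e^{\alpha t} \resolop_{\alpha}\mu(x) \leq \rho^{k} e^{\alpha t},
\end{align}
where the last inequality is by the definition of $\rho$.

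Finally, by monotone convergence applied to the partial sums of the Taylor series of $e^{A_{t}}$,
\begin{equation}
\sup_{x \in S} E_{x}\bigl[ e^{A_{t}} \bigr]
= \sup_{x \in S} \sum_{k=0}^{\infty} \phi_{k}(t,x)
\leq e^{\alpha t} \sum_{k=0}^{\infty} \rho^{k}
= \frac{e^{\alpha t}}{1-\rho},
\end{equation}
proving the claim with $c_{1} \coloneqq (1-\rho)^{-1}$ and $c_{2} \coloneqq \alpha$. No step is genuinely difficult: the only nontrivial point is the clean extraction of the convolution recursion from \eqref{2. cor eq: moment formula 2}, which boils down to the routine change of variables above. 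The hypothesis $\mu \in \smooth_{\mathrm{EK}}$ enters only in selecting $\alpha$ with $\rho < 1$, which is precisely what ensures convergence of the geometric series at the end.
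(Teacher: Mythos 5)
Your proof is correct, and its second half takes a genuinely different route from the paper's. The paper also starts by picking $\alpha$ with $\|\resolop_{\alpha}\mu\|_{\infty}<1$ and feeding it into \eqref{2. cor eq: moment formula 2}, but it only does so at a single small time $t_{\alpha}=s_{\alpha}/\alpha$ chosen so that $c\coloneqq e^{s_{\alpha}}\|\resolop_{\alpha}\mu\|_{\infty}<1$; this gives $E_{x}[A_{t_{\alpha}}^{k}]\leq k!\,c^{k}$, hence $E_{x}[e^{A_{t_{\alpha}}}]\leq (1-c)^{-1}$, and the bound is then propagated to arbitrary $t$ by iterating the Markov property over consecutive blocks of length $t_{\alpha}$ (which requires the small observation $E_{\Delta}[e^{A_{t_{\alpha}}}]=1$ from Remark \ref{2. rem: PCAF in the strict sense outside defining set}). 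You instead extract from \eqref{2. cor eq: moment formula 2} the convolution recursion $\phi_{k}(t,x)=\int_{0}^{t}\int_{S}p_{s}(x,y)\phi_{k-1}(t-s,y)\,\mu(dy)\,ds$ and prove the global-in-time bound $\phi_{k}(t,x)\leq\rho^{k}e^{\alpha t}$ by induction, which yields $\sup_{x}E_{x}[e^{A_{t}}]\leq e^{\alpha t}/(1-\rho)$ in one shot, with no Markov-property chaining and with explicit constants $c_{1}=(1-\rho)^{-1}$, $c_{2}=\alpha$. All your steps check out: the change of variables giving the recursion is valid by Tonelli (everything is nonnegative and $\mu$ is $\sigma$-finite), the monotonicity of $\alpha\mapsto\|\resolop_{\alpha}\mu\|_{\infty}$ is immediate from \eqref{2. eq: potential density} (or from \eqref{2. eq: density resolvent equation} as you note), and the term-by-term summation is justified by monotone convergence even before one knows $A_{t}<\infty$ a.s. The trade-off is that your argument leans on the precise product structure of the $k$-th moment formula to set up the recursion, whereas the paper's block argument only needs the moment formula on one short time interval plus the Markov property; both are elementary, and yours is arguably the cleaner quantitative statement.
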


\begin{proof}
  Let $\mu$ be the Revuz measure of $A$. By $\mu \in \smooth_{\mathrm{EK}}$,
  we can take $\alpha \in (0, \infty)$ satisfying $\|\resolop_{\alpha}\mu\|_{\infty} < 1$.
  Choose $s_{\alpha} \in (0, \infty)$ so that $c \coloneqq e^{s_{\alpha}} \|\resolop_{\alpha}\mu\|_{\infty} < 1$,
  set $t_{\alpha} \coloneqq s_{\alpha}/\alpha$, and let $x \in S$. Then,
  \begin{equation} \label{2. eq: finite exponential moment proof-1}
    \int_{0}^{t_{\alpha}} \int_{S} p_{t}(x, y)\, \mu(dy)\, dt 
    \leq
    e^{\alpha t_{\alpha}}
    \int_{S} \int_{0}^{t_{\alpha}} e^{-\alpha t} p_{t}(x, y) \, dt \, \mu(dy)
    \leq
    e^{\alpha t_{\alpha}} 
    \|\resolop_{\alpha}\mu\|_{\infty}
    = 
    c < 1,
  \end{equation}
  which together with Corollary \ref{2. cor: applications}\ref{2. cor item: moment formula 2}
  shows that $E_{x}[A_{t_{\alpha}}^{k}] \leq k! c^{k}$ for each $k \in \mathbb{N}$.
  This then yields
  \begin{equation} \label{2. eq: finite exponential moment proof-2}
    E_{x}\bigl[ e^{A_{t_{\alpha}}} \bigr] 
    = 
    \sum_{k \geq 0} \frac{1}{k!} E_{x}[A_{t_{\alpha}}^{k}]
    \leq
    \frac{1}{1-c}
    \eqqcolon
    c_{1} \in [1, \infty).
  \end{equation}
  Now for each $t \in [0, \infty)$,
  taking $n = n_{t} \in \mathbb{N}$ such that $(n-1)t_{\alpha} \leq t < n t_{\alpha}$,
  and noting that $E_{\Delta}\bigl[ e^{A_{t_{\alpha}}} \bigr] = 1 \leq c_{1}$ by Remark \ref{2. rem: PCAF in the strict sense outside defining set},
  we see from the Markov property of $X$ and \eqref{2. eq: finite exponential moment proof-2} that
  \begin{equation}
    E_{x}\bigl[ e^{A_{t}} \bigr] 
    \leq
    E_{x}\bigl[ e^{A_{n t_{\alpha}}} \bigr] 
    =
    E_{x}\Bigl[ e^{A_{(n-1)t_{\alpha}}} E_{X_{(n-1)t_{\alpha}}}\bigl[ e^{A_{t_{\alpha}}} \bigr] \Bigr]
    \leq
    c_{1} E_{x}\bigl[ e^{A_{(n-1)t_{\alpha}}} \bigr]
    \leq
    c_{1}^{n}
    \leq
    c_{1}^{1+t/t_{\alpha}},
  \end{equation}
  which completes the proof.
\end{proof}

\begin{rem}
  It is not always the case that 
  a PCAF $A = (A_{t})_{t \geq 0}$ in the strict sense of $X$ with Revuz measure in $\smooth_{\mathrm{EK}}$ has uniformly bounded exponential moments at the life time $\zeta$, 
  i.e., $\sup_{x \in S}E_{x}\bigl[ e^{A_{\zeta}} \bigr] < \infty$. 
  If this is the case,
  then $(X,A)$ is said to be \emph{gaugeable}.
  The gaugeability is important in the study of Feynman--Kac semigroups and Schr\"{o}dinger operators
  and there has been plenty of research on it.
  An analytic characterization of the gaugeability can be found in \cite{Kim_Kuwae_17_Analytic}, for example.
\end{rem}


\section{Proofs of the main theorems} \label{sec: Proofs of the main results}

In this section, we prove our main theorems, Theorems \ref{2. thm: main result} and \ref{2. thm: generalized Kac's formula}.
We continue to assume the setting specified in Subsection \ref{sec: PCAFs and smooth measures}.
The main ingredient of the proofs is the following fact.

\begin{lem} \label{2. lem: basic Kac formula}
  Let $A=(A_{t})_{t \geq 0}$ be a PCAF in the strict sense of $X$,
  let $\mu$ be the Revuz measure of $A$, and
  let $f \colon S \to [0, \infty]$ be Borel measurable.
  Then, for any $t \in (0, \infty]$ and $x \in S$,
  \begin{equation} \label{2. eq: basic Kac formula}
    E_{x}\biggl[
      \int_{0}^{t} f(X_{s})\, dA_{s}
    \biggr]
    = 
    \int_{0}^{t} \int_{S} p_{s}(x,y) f(y)\, \mu(dy)\, ds.
  \end{equation}
\end{lem}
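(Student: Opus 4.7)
The plan is to deduce \eqref{2. eq: basic Kac formula} from the Revuz correspondence \ref{2. item: Revuz correspondence} via uniqueness of the Laplace transform of Borel measures on $[0,\infty)$. The key object is the (expected occupation) Borel measure
\[
\Psi(B) \coloneqq E_{x}\biggl[ \int_{0}^{\infty} 1_{B}(s) f(X_{s})\, dA_{s} \biggr]
\]
on $[0, \infty)$. Writing $\phi(s) \coloneqq \int_{S} p_{s}(x, y) f(y)\, \mu(dy)$, the desired identity \eqref{2. eq: basic Kac formula} is exactly $\Psi([0, t]) = \int_{0}^{t} \phi(s)\, ds$ for every $t \in (0, \infty]$, and will be obtained from the stronger statement $d\Psi(s) = \phi(s)\, ds$ as measures on $[0, \infty)$.

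To secure the finiteness needed for Laplace-transform uniqueness, I would first reduce to the case $\mu \in \smooth_{00}$ with $\|f\|_{\infty} < \infty$. Take $(S_{n})_{n \in \mathbb{N}}$ as in Definition \ref{2. dfn: smooth measures in the strict sense}, so that $1_{S_{n}} \cdot \mu \in \smooth_{00}$; by Lemma \ref{2. lem: restriction of smoothe measures}, $B^{(n)}_{t} \coloneqq \int_{0}^{t} 1_{S_{n}}(X_{s})\, dA_{s}$ is a PCAF in the strict sense with Revuz measure $1_{S_{n}} \cdot \mu$. Granting \eqref{2. eq: basic Kac formula} with $(A, \mu, f)$ replaced by $(B^{(n)}, 1_{S_{n}} \cdot \mu, f \wedge n)$, the monotone convergence theorem applied to both sides as $n \to \infty$ recovers the general case, since $S = \bigcup_{n} S_{n}$ and $f(\Delta) = 0$.

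Under the standing assumption $\mu \in \smooth_{00}$ and $\|f\|_{\infty} < \infty$, I would combine \ref{2. item: Revuz correspondence} with Fubini's theorem applied to $\resol_{\alpha}(x, y) = \int_{0}^{\infty} e^{-\alpha s} p_{s}(x, y)\, ds$ to obtain, for every $\alpha \in (0, \infty)$,
\[
\int_{[0,\infty)} e^{-\alpha s}\, d\Psi(s)
= E_{x}\biggl[ \int_{0}^{\infty} e^{-\alpha s} f(X_{s})\, dA_{s} \biggr]
= \int_{S} \resol_{\alpha}(x, y) f(y)\, \mu(dy)
= \int_{0}^{\infty} e^{-\alpha s} \phi(s)\, ds.
\]
For $\alpha = 1$, the common value is bounded above by $\|f\|_{\infty} \|\resolop_{1} \mu\|_{\infty} < \infty$, so the two \emph{finite} Borel measures $e^{-s}\, d\Psi(s)$ and $e^{-s} \phi(s)\, ds$ on $[0, \infty)$ have coincident Laplace transforms at every $\alpha \geq 0$. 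By the uniqueness of the Laplace transform of finite Borel measures on $[0, \infty)$ (a consequence of the Stone--Weierstrass theorem), they coincide, whence $d\Psi(s) = \phi(s)\, ds$; evaluating both sides on $[0, t]$ yields \eqref{2. eq: basic Kac formula}.

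The main obstacle is to arrange the setting so that Laplace-transform uniqueness actually applies, i.e., that the relevant measures are locally finite with at least one finite exponential moment; this is precisely the role of the reduction to $\mu \in \smooth_{00}$ via Lemma \ref{2. lem: restriction of smoothe measures} together with the truncation $f \wedge n$. Once this finiteness is in place, the remainder of the argument consists only of Fubini's theorem, the definition of $\resol_{\alpha}$, and a standard application of Laplace-transform uniqueness.
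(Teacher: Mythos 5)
Your proposal is correct and follows essentially the same route as the paper: reduce to $1_{S_{n}}\cdot\mu \in \smooth_{00}$ with $f$ truncated to $f \wedge n$, identify the Laplace--Stieltjes transforms of the occupation measure and of $\phi(s)\,ds$ via \ref{2. item: Revuz correspondence} and Fubini's theorem, conclude by uniqueness of the Laplace transform of finite measures, and let $n\to\infty$ by monotone convergence. The only (immaterial) difference is that the paper applies \ref{2. item: Revuz correspondence} directly to the function $(f\wedge n)1_{S_{n}}$ rather than passing through the restricted PCAF $B^{(n)}$ of Lemma \ref{2. lem: restriction of smoothe measures}.
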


\begin{proof}
  This result is stated in \cite[Proposition~2.32]{Kajino_Murugan_pre_Heat}. 
  However, to ensure that it is implied solely by \ref{2. item: Revuz correspondence}
  without relying on any additional assumptions, we provide a complete proof here.
  Recalling that $\mu \in \smooth_{1}$ as mentioned after \eqref{2. eq: Revuz measure},
  let $(S_{n})_{n \in \mathbb{N}}$ be as in Definition \ref{2. dfn: smooth measures in the strict sense}.
  Let $x \in S$, $n \in \mathbb{N}$, and consider the Borel measures
  $\eta_{1},\eta_{2}$ on $[0,\infty)$ given by $\eta_{1}(B) \coloneqq E_{x}\bigl[\int_{B}((f\wedge n)1_{S_{n}})(X_{s})\,dA_{s}\bigr]$
  and $\eta_{2}(B) \coloneqq \int_{B} \int_{S_{n}} p_{s}(x,y)(f\wedge n)(y)\, \mu(dy)\, ds$ for $B \in \mathcal{B}([0,\infty))$.
  It then follows from Fubini's theorem, \eqref{2. eq: potential density}, \ref{2. item: Revuz correspondence}
  and $1_{S_{n}} \cdot \mu \in \smooth_{00}$ that the Laplace--Stieltjes transforms of $\eta_{1},\eta_{2}$
  coincide and are finite on $(0,\infty)$. Thus $\eta_{1}=\eta_{2}$, namely \eqref{2. eq: basic Kac formula}
  with $(f\wedge n)1_{S_{n}}$ in place of $f$ holds for any $t \in (0,\infty]$, and letting $n \to \infty$
  yields \eqref{2. eq: basic Kac formula} by the monotone convergence theorem.
\end{proof}

A simple application of the monotone class theorem allows us to extend Lemma \ref{2. lem: basic Kac formula} as follows.

\begin{prop} \label{prop: improved kac formula}
  Let $A=(A_{t})_{t \geq 0}$ be a PCAF in the strict sense of $X$,
  let $\mu$ be the Revuz measure of $A$, and
  let $f \colon (0, \infty) \times S \to [0, \infty]$ be Borel measurable.
  Then, with the convention that $f(t,\Delta) \coloneqq 0$ for any $t \in (0, \infty)$,
  it holds that for any $t \in (0, \infty]$ and $x \in S$,
  \begin{equation} \label{eq: pre Kac formula}
    E_{x}\biggl[
      \int_{0}^{t} f(s, X_{s})\, dA_{s}
    \biggr]
    = 
    \int_{0}^{t} \int_{S} p_{s}(x,y) f(s, y)\, \mu(dy)\, ds.
  \end{equation}
\end{prop}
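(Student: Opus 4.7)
The plan is to deduce \eqref{eq: pre Kac formula} from Lemma~\ref{2. lem: basic Kac formula} by a routine uniqueness-of-measures argument, which the statement of the proposition itself already hints at. Fix $x \in S$, and first suppose $t \in (0,\infty)$; the case $t=\infty$ will be recovered at the end by monotone convergence in $t$ on both sides of \eqref{eq: pre Kac formula}. I would define two set functions on $\mathcal{B}((0,\infty)\times S)$ by
\begin{align}
\nu_{1}(C) &\coloneqq E_{x}\biggl[\int_{0}^{t}1_{C}(s,X_{s})\,dA_{s}\biggr],\\
\nu_{2}(C) &\coloneqq \int_{0}^{t}\!\int_{S}p_{s}(x,y)1_{C}(s,y)\,\mu(dy)\,ds,
\end{align}
verify by the monotone convergence theorem (applied to Lebesgue--Stieltjes integrals and then to expectation) that $\nu_{1}$ is a Borel measure, and by Fubini--Tonelli that $\nu_{2}$ is one. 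Then \eqref{eq: pre Kac formula} for bounded Borel $f \ge 0$ is exactly $\int f\,d\nu_{1} = \int f\,d\nu_{2}$, and passage to general nonnegative Borel $f$ is a single additional invocation of monotone convergence.

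To prove $\nu_{1} = \nu_{2}$, I would invoke the Dynkin $\pi$--$\lambda$ theorem on the $\pi$-system
\begin{equation}
\Pi \coloneqq \{(a,b]\times E \mid 0 < a < b,\ E \in \mathcal{B}(S)\},
\end{equation}
which generates $\mathcal{B}((0,\infty)\times S)$. For a rectangle $(a,b]\times E \in \Pi$, the pathwise identity
\begin{equation}
\int_{0}^{t}1_{(a,b]}(s)1_{E}(X_{s})\,dA_{s} = \int_{0}^{t\wedge b}1_{E}(X_{s})\,dA_{s} - \int_{0}^{t\wedge a}1_{E}(X_{s})\,dA_{s}
\end{equation}
reduces the computation of $\nu_{1}((a,b]\times E)$ to two invocations of Lemma~\ref{2. lem: basic Kac formula} with upper limits $t\wedge b$ and $t\wedge a$, whose difference is precisely $\nu_{2}((a,b]\times E)$.

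For the uniqueness clause of $\pi$--$\lambda$ I also need a common $\sigma$-finite exhaustion of $(0,\infty)\times S$ by sets in $\Pi$. Taking $(S_{n})_{n \in \mathbb{N}}$ as in Definition~\ref{2. dfn: smooth measures in the strict sense}, so that $1_{S_{n}} \cdot \mu \in \smooth_{00}$ for each $n$, and using the crude bound $p_{s}(x,y) \le e^{k}e^{-s}p_{s}(x,y)$ for $s \in (0,k]$, the rectangles $(1/k,k]\times S_{n} \in \Pi$ exhaust $(0,\infty)\times S$ as $k,n \to \infty$ and satisfy
\begin{equation}
\nu_{2}((1/k,k]\times S_{n}) \le e^{k}\resolop_{1}(1_{S_{n}}\cdot\mu)(x) \le e^{k}\|\resolop_{1}(1_{S_{n}}\cdot\mu)\|_{\infty} < \infty,
\end{equation}
with the same finiteness for $\nu_{1}$ then following from the already-proved equality on $\Pi$. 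Hence $\nu_{1} = \nu_{2}$, which completes the proof. I do not foresee any genuine obstacle here; the only mildly delicate step is the $\sigma$-finiteness check, which is essentially built into Definition~\ref{2. dfn: smooth measures in the strict sense} via the class $\smooth_{00}$.
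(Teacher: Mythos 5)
Your argument is in substance the paper's own proof: both reduce \eqref{eq: pre Kac formula} to Lemma \ref{2. lem: basic Kac formula} evaluated on time--space rectangles via a monotone-class argument, with the finiteness needed for the uniqueness step supplied by the $\smooth_{00}$ exhaustion $(S_{n})_{n \in \mathbb{N}}$. One step as written needs reordering: for a general rectangle $(a,b] \times E$ with $E \in \mathcal{B}(S)$, the quantity $E_{x}\bigl[\int_{0}^{t \wedge a} 1_{E}(X_{s})\, dA_{s}\bigr] = \int_{0}^{t \wedge a}\int_{S} p_{s}(x,y) 1_{E}(y)\, \mu(dy)\, ds$ can be $+\infty$, so the subtraction by which you compute $\nu_{1}((a,b]\times E)$ is not justified on all of $\Pi$, and the phrase ``the same finiteness for $\nu_{1}$ then following from the already-proved equality on $\Pi$'' is circular in that order; the fix is to verify $\nu_{1}=\nu_{2}$ only on the sub-$\pi$-system of rectangles $(a,b]\times E$ with $E \subseteq S_{n}$ for some $n$ (where your bound by $e^{k}\|\resolop_{1}(1_{S_{n}}\cdot\mu)\|_{\infty}$ makes every term finite), which still generates $\mathcal{B}((0,\infty)\times S)$ and contains your exhausting sequence, so the $\pi$--$\lambda$ theorem applies unchanged. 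The paper achieves exactly this by first reducing to $\mu \in \smooth_{00}$ via Lemma \ref{2. lem: restriction of smoothe measures} and only then running a functional monotone class argument; the two organizations are interchangeable.
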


\begin{proof}
  By the monotone convergence theorem,
  it suffices to show \eqref{eq: pre Kac formula} for $t \in (0,\infty)$.
  Let $t \in (0,\infty)$, $x \in S$, and
  assume that \eqref{eq: pre Kac formula} holds when $\mu \in \smooth_{00}$.
  To see \eqref{eq: pre Kac formula} in the general case,
  recalling that $\mu \in \smooth_{1}$ as mentioned after \eqref{2. eq: Revuz measure},
  let $(S_{n})_{n \in \mathbb{N}}$ be as in Definition \ref{2. dfn: smooth measures in the strict sense},
  so that $(S_{n})_{n \in \mathbb{N}}$ is non-decreasing and $S = \bigcup_{n \in \mathbb{N}}S_{n}$.
  By Lemma \ref{2. lem: restriction of smoothe measures}, for each $n \in \mathbb{N}$,
  the process $A^{(n)} = \bigl(A^{(n)}_{t}\bigr)_{t \geq 0}$ defined by setting $A^{(n)}_{t} \coloneqq \int_{0}^{t} 1_{S_{n}}(X_{s})\, dA_{s}$
  is a PCAF in the strict sense of $X$ with Revuz measure $1_{S_{n}} \cdot \mu_{n}$.
  Now we conclude \eqref{eq: pre Kac formula}
  by using the monotone convergence theorem to let $n \to \infty$ in the equality
  \begin{equation}
    E_{x}\biggl[
      \int_{0}^{t} f(s, X_{s}) 1_{S_{n}}(X_{s}) \, dA_{s}
    \biggr]
    = 
    E_{x}\biggl[
      \int_{0}^{t} f(s, X_{s}) \, dA^{(n)}_{s}
    \biggr]
    = 
    \int_{0}^{t} \int_{S} p_{s}(x,y) f(s, y) 1_{S_{n}}(y)\, \mu(dy)\, ds
  \end{equation}
  implied for any $n \in \mathbb{N}$ by the assumed validity of \eqref{eq: pre Kac formula} for $A^{(n)}$ and $1_{S_{n}} \cdot \mu \in \smooth_{00}$.
  
  It thus remains to prove \eqref{eq: pre Kac formula} when $\mu \in \smooth_{00}$.
  Define $\mathcal{H}$ to be the collection of $[0, \infty]$-valued Borel measurable functions on $(0, \infty) \times S$
  satisfying \eqref{eq: pre Kac formula} for any $t \in (0, \infty)$ and $x \in S$.
  Then $\mathcal{H}$ is closed under multiplication by any element of $[0, \infty]$ and under sum,
  and $1_{(0,\infty) \times S} \in \mathcal{H}$ by Lemma \ref{2. lem: basic Kac formula}.
  Moreover, for any $t \in (0, \infty)$, $x \in S$, and $E \in \mathcal{B}(S)$,
  the same argument as \eqref{2. eq: finite exponential moment proof-1} yields
  $\int_{0}^{t} \int_{S} p_{s}(x,y) 1_{E}(y)\, \mu(dy)\, ds \leq e^{t} \|\resolop_{1}\mu\|_{\infty}$,
  which is finite since $\mu \in \smooth_{00}$.
  It therefore follows from Lemma \ref{2. lem: basic Kac formula} that
  $1_{(s_{1}, s_{2}] \times E} \in \mathcal{H}$ for any $s_{1},s_{2} \in [0, \infty)$
  with $s_{1} < s_{2}$ and any $E \in \mathcal{B}(S)$, and hence from the monotone class theorem
  (see \cite[Chapter 0, Proof of Theorem 2.3]{Blumenthal_Getoor_68_Markov}) that
  $\mathcal{H}$ is equal to the set of all $[0, \infty]$-valued Borel measurable functions on $(0,\infty) \times S$,
  completing the proof.
\end{proof}

Now, we are ready to prove Theorems \ref{2. thm: main result} and \ref{2. thm: generalized Kac's formula}.

\begin{proof} [Proof of Theorem \textup{\ref{2. thm: main result}}]
  Set $F_{\infty}(\omega) \coloneqq 0 \eqqcolon F_{0}(\omega)$ for $\omega \in \Omega$
  and, for each $t \in [0, \infty)$,
  define $\tau_{t} \colon \Omega \to [0, \infty]$ by setting
  $\tau_{t}(\omega) \coloneqq \inf\{s \in [0, \infty) \mid A_{s}(\omega) > t\}$
  ($\inf \emptyset \coloneqq \infty$), so that
  $\tau_{t}$ is an $\filt_{*}$-stopping time by \cite[Proposition A.3.8(i)]{Chen_Fukushima_12_Symmetric}
  and the function $[0, \infty) \ni s \mapsto \tau_{s}(\omega) \in [0, \infty]$
  is non-decreasing and right-continuous for any $\omega \in \Omega$.
  In particular, by \cite[Proof of Lemma A.2.4]{Fukushima_Oshima_Takeda_11_Dirichlet},
  as maps in $(s, \omega) \in [0, \infty) \times \Omega$, the functions
  $\tau_{s}(\omega)$ and $1_{[0,t \wedge \zeta(\omega))}(\tau_{s}(\omega))$
  for $t \in (0,\infty]$ are $\mathcal{B}([0,\infty)) \otimes \filt_{\infty}$-measurable,
  the $S_{\Delta}$-valued map $X_{\tau_{s}}(\omega) \coloneqq X_{\tau_{s}(\omega)}(\omega)$
  is $\mathcal{B}([0,\infty)) \otimes \filt_{\infty}/\mathcal{B}(S_{\Delta})$-measurable,
  and hence the functions $1_{\Lambda}(\omega) F_{\tau_{s}} \circ \theta_{\tau_{s}}(\omega)$
  and $E_{X_{\tau_{s}}(\omega)}[ F_{u} ]|_{u = \tau_{s}(\omega)}$
  are $\mathcal{B}([0,\infty)) \otimes \filt_{\infty}$-measurable,
  where $\Lambda$ is as in condition \ref{2. thm item: main result condition-1} and
  $F_{\tau_{s}} \circ \theta_{\tau_{s}}(\omega) \coloneqq F_{\tau_{s}(\omega)}(\theta_{\tau_{s}(\omega)}\omega)$.
  Letting $t \in (0, \infty]$ and $x \in S$, and noting
  $P_{x}(\Lambda)=1$, the $\mathcal{B}([0,\infty)) \otimes \filt_{\infty}$-measurability
  of the functions in the previous sentence, and that
  $1_{[0, t \wedge \zeta)}(\tau_{s})$ is $\filt_{\tau_{s}}$-measurable
  for each $s \in [0,\infty)$ (see, e.g., \cite[Chapter I, Proposition 6.8]{Blumenthal_Getoor_68_Markov}),
  now we obtain
  \begin{align}
    E_{x}\biggl[ \int_{0}^{t} F_{s} \circ \theta_{s}\, dA_{s} \biggr] 
    &=
    E_{x}\biggl[
      \int_{0}^{\infty}
      1_{[0, t \wedge \zeta)}(\tau_{s}) 1_{\Lambda} \cdot F_{\tau_{s}} \circ \theta_{\tau_{s}}
    \, ds \biggr] \quad \textrm{(by \cite[Chapter V, Lemma 2.2]{Blumenthal_Getoor_68_Markov})} \\
    &= 
    \int_{0}^{\infty} 
      E_{x}\bigl[ 1_{[0, t \wedge \zeta)}(\tau_{s}) 1_{\Lambda} \cdot F_{\tau_{s}} \circ \theta_{\tau_{s}} \bigr] 
    \, ds \quad \textrm{(by Fubini's theorem)} \\
    &=
    \int_{0}^{\infty} 
      E_{x}\bigl[ 1_{[0, t \wedge \zeta)}(\tau_{s})
        E_{x}[ F_{\tau_{s}} \circ \theta_{\tau_{s}} \mid \filt_{\tau_{s}}] 
      \bigr] 
    \, ds \\
    &= 
    \int_{0}^{\infty} 
      E_{x}\bigl[ 1_{[0, t \wedge \zeta)}(\tau_{s})
        E_{X_{\tau_{s}}}[ F_{u} ]|_{u = \tau_{s}} 
      \bigr] 
    \, ds \\ 
    &= 
    E_{x}\biggl[
      \int_{0}^{\infty} 
        1_{[0, t \wedge \zeta)}(\tau_{s})
        E_{X_{\tau_{s}}}[ F_{u} ]|_{u = \tau_{s}}
      \, ds
    \biggr] \quad \textrm{(by Fubini's theorem)} \\
    &=
    E_{x}\biggl[
      \int_{0}^{t} 1_{S}(X_{s}) E_{X_{s}}[F_{s}]\, dA_{s}
    \biggr] \quad \textrm{(by \cite[Chapter V, Lemma 2.2]{Blumenthal_Getoor_68_Markov});}
    \label{3. eq: main result proof}
  \end{align}
  here the fourth equality follows by applying
  \cite[Chapter I, Exercise 8.16]{Blumenthal_Getoor_68_Markov},
  which holds by the strong Markov property of $X$ (see, e.g., \cite[Theorem A.1.21]{Chen_Fukushima_12_Symmetric}),
  to the $\filt_{*}$-stopping time $\tau_{s}$ and
  the $\filt_{\tau_{s}} \otimes \filt_{\infty}$-measurable function
  $\Omega \times \Omega \ni (\omega, \omega') \mapsto F_{\tau_{s}(\omega)}(\omega')$.
  Combining \eqref{3. eq: main result proof} with Proposition \ref{prop: improved kac formula},
  we get \eqref{2. eq: main result}.
\end{proof}

\begin{rem} \label{3. rem: optional projection}
  The arguments in \eqref{3. eq: main result proof} can be interpreted in terms of the general theory
  of optional projections (see \cite[pp.~390--391 and Section~22]{Sharpe_88_General})
  as identifying the optional projection of $(1_{S}(X_{t}) F_{t} \circ \theta_{t})_{t \geq 0}$
  as $(1_{S}(X_{t}) E_{X_{t}}[F_{t}])_{t \geq 0}$ and then
  applying \cite[Equation~(A5.18)]{Sharpe_88_General} (for an arbitrarily chosen
  $\filt_{\infty}$-measurable function $F_{0} \colon \Omega \to [0,\infty]$).
  Note, however, that this observation does not allow us to reduce the proof of
  \eqref{3. eq: main result proof} to a mere application of \cite[Equation~(A5.18)]{Sharpe_88_General},
  because the former identification is a non-trivial consequence of a version
  \cite[Chapter I, Exercise 8.16]{Blumenthal_Getoor_68_Markov} of the strong Markov property of $X$.
\end{rem}

\begin{proof} [Proof of Theorem \textup{\ref{2. thm: generalized Kac's formula}}]
  Let $f \colon S_{\Delta} \to [0, \infty]$ be Borel measurable.
  First, for any $t \in (0,\infty]$ and $x \in S$,
  \begin{equation}
    E_{x}\Biggl[ f(X_{t}) \prod_{i=1}^{k} A^{(i)}_{t} \Biggr]
    = 
    \sum_{\pi \in \mathfrak{S}_{k}}
    E_{x}\biggl[
      \int_{0}^{t} dA^{(\pi_{1})}_{t_{1}} 
      \int_{t_{1}}^{t} dA^{(\pi_{2})}_{t_{2}} 
      \cdots 
      \int_{t_{k-1}}^{t} dA^{(\pi_{k})}_{t_{k}} 
      f(X_{t})
    \biggr].
  \end{equation}
  It thus suffices to show by induction on $k \in \mathbb{N}$ that,
  for any $A^{(1)},\ldots,A^{(k)}$ with their respective Revuz measures $\mu_{1},\ldots,\mu_{k}$
  as in the statement, $t \in (0,\infty]$, and $x \in S$,
  \begin{align} 
    \lefteqn{E_{x}\biggl[
      \int_{0}^{t} dA^{(1)}_{t_{1}} 
      \int_{t_{1}}^{t} dA^{(2)}_{t_{2}} 
      \cdots 
      \int_{t_{k-1}}^{t} dA^{(k)}_{t_{k}} 
      f(X_{t})
    \biggr]}\\
    &=
    \int_{0}^{t} dt_{1} \int_{t_{1}}^{t} dt_{2} \cdots \int_{t_{k-1}}^{t} dt_{k} 
    \int_{S} \mu_{1}(dy_{1}) \int_{S} \mu_{2}(dy_{2})
    \cdots \int_{S} \mu_{k}(dy_{k}) \int_{S_{\Delta}} m_{\Delta}(dz)\\
    &\qquad
    p_{t_{1}}(x, y_{1}) p_{t_{2}-t_{1}}(y_{1}, y_{2})
    \cdots p_{t_{k}-t_{k-1}}(y_{k-1}, y_{k}) p_{t-t_{k}}(y_{k}, z) f(z).
    \label{3. eq: the goal of induction}
  \end{align}
  This claim holds for $k=1$; indeed,
  applying Theorem \ref{2. thm: main result} with $F_{s} = f(X_{(t-s) \vee 0})$, we have
  \begin{align}
    E_{x}\biggl[ \int_{0}^{t} dA^{(1)}_{s} f(X_{t}) \biggr]
    &= 
    E_{x}\biggl[ \int_{0}^{t} f(X_{t-s}) \circ \theta_{s}\, dA^{(1)}_{s} \biggr]\\
    &= 
    \int_{0}^{t} \int_{S} p_{s}(x, y) E_{y}[f(X_{t-s})]\, \mu_{1}(dy)\, ds
    \quad \textrm{(by Theorem \ref{2. thm: main result})} \\
    &= 
    \int_{0}^{t} \int_{S} \int_{S_{\Delta}} p_{s}(x, y) p_{t-s}(y, z) f(z)\, m_{\Delta}(dz)\, \mu_{1}(dy)\, ds
    \quad \textrm{(by \eqref{2. eq: extended heat kernel property}).}
  \end{align}
  
  Next, let $k \geq 2$ and suppose that the above claim with $k-1$ in place of $k$ holds.
  Let $t \in (0, \infty]$ and, for each $s \in (0, \infty)$,
  define $F^{(1)}_{s},F^{(2)}_{s},F_{s} \colon \Omega \to [0, \infty]$ by setting
  \begin{equation}
  F^{(1)}_{s}
    \coloneqq
    \int_{0}^{(t-s) \vee 0} dA^{(2)}_{t_{2}}
    \int_{t_{2}}^{(t-s) \vee 0} dA^{(3)}_{t_{3}}
    \cdots
    \int_{t_{k-1}}^{(t-s) \vee 0} dA^{(k)}_{t_{k}},
  \mspace{12mu}
  F^{(2)}_{s} \coloneqq f(X_{(t-s) \vee 0}),
  \mspace{12mu}
  F_{s} \coloneqq F^{(1)}_{s} F^{(2)}_{s},
  \end{equation}
  so that, taking a defining set $\Lambda_{i} \in \filt_{\infty}$ of $A^{(i)}$ for each $i \in \{2,\ldots,k\}$
  and setting $\Lambda \coloneqq \bigcap_{i=2}^{k}\Lambda_{i}$, we have $P_{x}(\Lambda) = 1$ for all $x \in S$ and
  \begin{equation} \label{3. eq: the latter of induction}
  1_{\Lambda} \cdot F^{(1)}_{s} \circ \theta_{s}
    =
    1_{\Lambda}
    \int_{s \wedge t}^{t} dA^{(2)}_{t_{2}}
    \int_{t_{2}}^{t} dA^{(3)}_{t_{3}}
    \cdots \int_{t_{k-1}}^{t} dA^{(k)}_{t_{k}}.
  \end{equation}
  In particular, $F^{(1)}_{s}(\omega)$ and $1_{\Lambda}(\omega) F^{(1)}_{s} \circ \theta_{s}(\omega)$
  are right-continuous as $[0, \infty]$-valued maps in $s \in (0, \infty)$ for each $\omega \in \Omega$
  by the monotone convergence theorem, and hence are $\mathcal{B}((0,\infty)) \otimes \filt_{\infty}$-measurable
  as functions in $(s,\omega) \in (0, \infty) \times \Omega$ by \cite[Proof of Lemma A.2.4]{Fukushima_Oshima_Takeda_11_Dirichlet}.
  Moreover, for any $s \in (0, \infty)$ and $x \in S$, $E_{x}[ F_{s} ] = 0$ if $s \geq t$,
  otherwise by the induction hypothesis $E_{x}[ F_{s} ]$ is equal to the right-hand side of
  \eqref{3. eq: the goal of induction} with $k,A^{(i)},\mu_{i},t$ replaced by $k-1,A^{(i+1)},\mu_{i+1},t-s$,
  and therefore $(0, \infty) \times S \ni (s,x) \mapsto E_{x}[ F_{s} ]$ is Borel measurable.
  Theorem \ref{2. thm: main result} is thus applicable to $F_{s} = F^{(1)}_{s} F^{(2)}_{s}$
  and, combined with \eqref{3. eq: the latter of induction}, shows that for any $x \in S$,
  \begin{align}
    E_{x}\biggl[
      \int_{0}^{t} dA^{(1)}_{t_{1}} 
      \int_{t_{1}}^{t} dA^{(2)}_{t_{2}} 
      \cdots 
      \int_{t_{k-1}}^{t} dA^{(k)}_{t_{k}} 
      f(X_{t})
    \biggr]
    &= 
    E_{x}\biggl[
      \int_{0}^{t} F_{t_{1}} \circ \theta_{t_{1}} \, dA^{(1)}_{t_{1}} 
    \biggr] \\
    &=
    \int_{0}^{t} \int_{S}
      p_{t_{1}}(x, y_{1}) E_{y_{1}}[ F_{t_{1}} ]
      \, \mu_{1}(dy_{1}) \, dt_{1},
  \end{align}
  which, together with the above-mentioned expression of $E_{y_{1}}[ F_{t_{1}} ]$
  implied by the induction hypothesis, yields \eqref{3. eq: the goal of induction}
  and thereby completes the proof.
\end{proof}

\section{Extension to standard processes with duals} \label{sec: extension to processes with duals}

As already mentioned in the second paragraph of Section \ref{sec: Intro},
our results in Subsection \ref{sec: main results} hold, without any changes in the proofs,
in the more general framework of a standard process with absolutely continuous transition function
and in duality with another such standard process. In this last section, we set out
this framework and give the background necessary for the discussions in
Subsection \ref{sec: main results} and Section \ref{sec: Proofs of the main results}
to extend immediately to this more general setting.

Recall the notation and conventions introduced at the beginning of Section \ref{sec: Setting and main results}.
In particular, $S$ is a fixed locally compact separable metrizable topological space, and
its one-point compactification is denoted by $S_{\Delta} = S \cup \{\Delta\}$.
Let $X = (\Omega, \sigalg, (X_{t})_{t \in [0, \infty)}, (P_{x})_{x \in S_{\Delta}}, (\theta_{t})_{t \in [0,\infty)})$
be a (Borel) standard process on $S$ (see \cite[the paragraph before Theorem A.2.1]{Fukushima_Oshima_Takeda_11_Dirichlet} or \cite[Definition~A.1.23(i)]{Chen_Fukushima_12_Symmetric}),
and write $\filt_{*} = (\filt_{t})_{t  \in [0,\infty]}$ for the minimum augmented
admissible filtration of $X$ in $\Omega$ (see \cite[p.~397]{Chen_Fukushima_12_Symmetric}).
Let $m$ be a $\sigma$-finite Borel measure on $S$. We assume that $X$ satisfies
the absolute continuity condition \ref{2. item: absolute continuity} and, instead of
being $m$-symmetric, the following duality assumption: there exists a standard process
$\hat{X} = (\hat{\Omega}, \hat{\sigalg}, (\hat{X}_{t})_{t \in [0, \infty)}, (\hat{P}_{x})_{x \in S_{\Delta}}, (\hat{\theta}_{t})_{t \in [0,\infty)})$
on $S$ satisfying \ref{2. item: absolute continuity} such that, for any $t \in (0, \infty)$ and Borel measurable functions $f, g \colon S \to [0,\infty]$,
\begin{equation} \label{4. eq: duality}
  \int_{S} E_{x}[f(X_{t})]\, g(x)\, m(dx) 
  = 
  \int_{S} f(x)\, \hat{E}_{x}[g(\hat{X}_{t})]\, m(dx).
\end{equation}
By \eqref{4. eq: duality}, \ref{2. item: absolute continuity} for $X,\hat{X}$, and \cite[Theorem~2]{Yan_88_A_formula},
there exists a unique Borel measurable function $p \colon (0, \infty) \times S \times S \to [0,\infty]$
satisfying, for any $s, t \in (0, \infty)$ and $x, y \in S$, $P_{x}(X_{t} \in dz) = p_{t}(x,z)\, m(dz)$,
$\hat{P}_{y}(\hat{X}_{t} \in dz) = p_{t}(z,y)\, m(dz)$, and $p_{t+s}(x, y) = \int_{S} p_{t}(x, z) p_{s}(z, y)\, m(dz)$,
and $p$ is called the \emph{transition density} (or \emph{heat kernel}) of $(X,\hat{X})$ (with respect to $m$).
Note that $m$ is then an excessive reference measure for $X$ by \eqref{4. eq: duality},
\ref{2. item: absolute continuity} of $X$ and \cite[Theorem A.2.13(ii)]{Chen_Fukushima_12_Symmetric}
(see \cite[p.~418]{Chen_Fukushima_12_Symmetric} and \cite[Chapter~V, Definition~1.1]{Blumenthal_Getoor_68_Markov}
for the definitions of $m$ being excessive and being a reference measure for $X$, respectively).
We apply the definition of $N \in \mathcal{B}(S)$ being properly exceptional for $X$,
\eqref{2. eq: extended heat kernel}, the definition of $m_{\Delta}$, \eqref{2. eq: potential density}, \eqref{2. eq: potential function},
Definitions \ref{2. dfn: PCAF}, \ref{2. dfn: PCAF in the strict sense}, 
the definition of $\tau_{E}$, and Definition \ref{2. dfn: smooth measures in the strict sense}
all as they are in Subsection \ref{sec: PCAFs and smooth measures}, so that
\eqref{2. eq: extended heat kernel property} and \eqref{2. eq: density resolvent equation} hold.

In the present setting, the Revuz correspondence between PCAFs of $X$ and a class of Borel measures on $S$ is formulated as follows
(see \cite[Subsection A.3.1]{Chen_Fukushima_12_Symmetric} and \cite{Revuz_70_Mesures} for details).
For each PCAF $A = (A_{t})_{t \geq 0}$ of $X$, by \cite[Lemma~A.3.4(ii) and Theorem~A.3.5(i)]{Chen_Fukushima_12_Symmetric}
one can define a Borel measure $\mu_{A}$ on $S$ given by \eqref{2. eq: Revuz measure}, and
$\mu_{A}$ charges no set semipolar for $X$ (i.e.,
$\mu_{A}(E) = 0$ for any $E \in \mathcal{B}(S)$ that is semipolar for $X$) by \cite[Theorem~A.3.5(ii)]{Chen_Fukushima_12_Symmetric}
(see \cite[Definition A.2.6]{Chen_Fukushima_12_Symmetric} for the definition of $E \subseteq S$ being semipolar for $X$).
Furthermore by \cite[Th\'{e}or\`{e}me~VI.1]{Revuz_70_Mesures},
the Revuz measure of a PCAF in the strict sense of $X$ is a smooth measure in the strict sense which,
as already mentioned, charges no set semipolar for $X$, and conversely each such measure
is the Revuz measure of a PCAF $A = (A_{t})_{t \geq 0}$ in the strict sense of $X$,
which is unique up to equivalence (recall Definition \ref{2. dfn: PCAF in the strict sense})
by \cite[Chapter~V, Corollary~2.8]{Blumenthal_Getoor_68_Markov}, \eqref{2. eq: Revuz measure},
\cite[Chapter~IV, Proposition~2.4(i)]{Blumenthal_Getoor_68_Markov},
\ref{2. item: absolute continuity} and \cite[Theorem A.2.17(iii)]{Chen_Fukushima_12_Symmetric}.
(To be precise, the existence of a defining set $\Lambda \in \filt_{\infty}$ of such $A$
is not treated in \cite{Revuz_70_Mesures}, but can be verified by examining
\cite[Proofs of Chapter IV, Theorem 3.16 and Chapter~V, Theorem~2.1]{Blumenthal_Getoor_68_Markov} carefully;
in this connection see also \cite[Remark 4.16]{Noda_pre_STOM}.)
Lemma \ref{2. lem: Revuz correspondence in the strict sense} is proved in the same way as before, with the only change being that
$\alpha \int_{S} \resol_{\alpha}(x,y) \, m(dx) = \int_{0}^{\infty} e^{-s} \hat{P}_{y}( s/\alpha < \hat{\zeta} ) \, ds$
instead, where $\hat{\zeta}$ denotes the life time of $\hat{X}$.
Remark \ref{2. rem: PCAF in the strict sense outside defining set} and Lemma \ref{2. lem: restriction of smoothe measures}
also remain applicable as they are.

Now, with the only exception of Remark \ref{2. rem: extension to part process}, whose necessary
changes are described in Remark \ref{4. rem: extension to part process with dual} below,
all the results and arguments in Subsection \ref{sec: main results} and Section \ref{sec: Proofs of the main results}
remain valid even in the present wider framework, and we thus obtain the following theorem.

\begin{thm} \label{4. thm: main result for process with dual}
  Assume the setting introduced so far in this section.
  Then Theorems \textup{\ref{2. thm: main result}} and \textup{\ref{2. thm: generalized Kac's formula}} hold,
  and consequently so do Corollaries \textup{\ref{2. cor: applications}} and \textup{\ref{2. cor: finite exponential moment}}.
\end{thm}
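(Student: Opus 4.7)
The plan is to verify that every step in the proofs of Theorems \ref{2. thm: main result} and \ref{2. thm: generalized Kac's formula} given in Section \ref{sec: Proofs of the main results} transfers verbatim to the present framework of a standard process $X$ with a dual $\hat{X}$ both satisfying \ref{2. item: absolute continuity}; after this, Corollaries \ref{2. cor: applications} and \ref{2. cor: finite exponential moment} follow word-for-word as before. The background already assembled in Section \ref{sec: extension to processes with duals}---the transition density $p$, the resolvent identity \eqref{2. eq: density resolvent equation}, the Revuz correspondence and Lemma \ref{2. lem: Revuz correspondence in the strict sense}, Remark \ref{2. rem: PCAF in the strict sense outside defining set}, and Lemma \ref{2. lem: restriction of smoothe measures}---supplies the bulk of the ingredients, so what remains is essentially an audit of the individual symmetry-uses in Section \ref{sec: Proofs of the main results}.

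First I would re-examine Lemma \ref{2. lem: basic Kac formula}: its proof depends only on the characterization \ref{2. item: Revuz correspondence} of the Revuz measure, Fubini's theorem, the definition of $\smooth_{00}$, and finiteness of $R_{1}\mu$, none of which invoke $m$-symmetry; so the lemma extends with no change. Proposition \ref{prop: improved kac formula} then carries over: the reduction to $\mu\in\smooth_{00}$ uses Lemma \ref{2. lem: restriction of smoothe measures} (already available), and the monotone-class argument on the set $\mathcal{H}$ is purely measure-theoretic, requiring only the indicator-case bound $\int_{0}^{t}\!\int_{S} p_{s}(x,y)1_{E}(y)\,\mu(dy)\,ds \leq e^{t}\|\resolop_{1}\mu\|_{\infty}$, which remains valid as stated.

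Next, for Theorem \ref{2. thm: main result}, the key non-trivial ingredients of the proof are (i) the change-of-variable formula \cite[Chapter~V, Lemma~2.2]{Blumenthal_Getoor_68_Markov} for Lebesgue--Stieltjes integrals along PCAFs, (ii) the fact that $\tau_{s} = \inf\{u \mid A_{u}>s\}$ is an $\filt_{*}$-stopping time (\cite[Proposition~A.3.8(i)]{Chen_Fukushima_12_Symmetric}), (iii) the joint-measurability statements of \cite[Proof of Lemma~A.2.4]{Fukushima_Oshima_Takeda_11_Dirichlet} and \cite[Chapter~I, Proposition~6.8]{Blumenthal_Getoor_68_Markov}, and (iv) the strong Markov property in the form \cite[Chapter~I, Exercise~8.16]{Blumenthal_Getoor_68_Markov}. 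Each of these is formulated in the cited references at the level of general (Borel) standard processes, so none of them relies on $m$-symmetry. Combined with the already-extended Proposition \ref{prop: improved kac formula}, the computation \eqref{3. eq: main result proof} goes through without change, establishing Theorem \ref{2. thm: main result}.

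Finally, Theorem \ref{2. thm: generalized Kac's formula} is obtained by induction on $k$ exactly as in Section \ref{sec: Proofs of the main results}: the base case $k=1$ is a direct application of Theorem \ref{2. thm: main result} with $F_{s}=f(X_{(t-s)\vee 0})$ together with \eqref{2. eq: extended heat kernel property}, and the inductive step reuses Theorem \ref{2. thm: main result} applied to $F_{s}=F^{(1)}_{s}F^{(2)}_{s}$, with the required joint measurability of $(s,\omega)\mapsto 1_{\Lambda}(\omega)F^{(1)}_{s}\circ\theta_{s}(\omega)$ and the Borel measurability of $(s,x)\mapsto E_{x}[F_{s}]$ arising from right-continuity in $s$ and the induction hypothesis, just as before. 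I do not expect any genuine obstacle: the only places in Section \ref{sec: Proofs of the main results} where $m$-symmetry could conceivably enter are through the Revuz characterization \ref{2. item: Revuz correspondence} and through Lemma \ref{2. lem: restriction of smoothe measures}, and Section \ref{sec: extension to processes with duals} has already recovered both (the proof of Lemma \ref{2. lem: Revuz correspondence in the strict sense} now uses the identity $\alpha\int_{S}\resol_{\alpha}(x,y)\,m(dx)=\int_{0}^{\infty}e^{-s}\hat{P}_{y}(s/\alpha<\hat{\zeta})\,ds$ in place of its symmetric analogue, which is the single substantive place where the dual process $\hat{X}$ is used).
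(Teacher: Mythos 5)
Your proposal is correct and follows essentially the same route as the paper: the paper's proof of Theorem \ref{4. thm: main result for process with dual} likewise consists of observing that, once the background facts of Subsection \ref{sec: PCAFs and smooth measures} (the transition density, the Revuz correspondence, Lemma \ref{2. lem: Revuz correspondence in the strict sense} via the dual identity $\alpha\int_{S}\resol_{\alpha}(x,y)\,m(dx)=\int_{0}^{\infty}e^{-s}\hat{P}_{y}(s/\alpha<\hat{\zeta})\,ds$, Remark \ref{2. rem: PCAF in the strict sense outside defining set}, and Lemma \ref{2. lem: restriction of smoothe measures}) have been re-established in the dual setting, the arguments of Subsection \ref{sec: main results} and Section \ref{sec: Proofs of the main results} apply verbatim, since they rest only on the strong Markov property, the change-of-variable formula of \cite[Chapter~V, Lemma~2.2]{Blumenthal_Getoor_68_Markov}, and measure-theoretic tools valid for general standard processes. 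Your audit of where $m$-symmetry could enter matches the paper's.
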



\begin{rem} \label{4. rem: extension to part process with dual}
Assume the setting introduced so far in this section. Let $D$ be a non-empty open subset of $S$,
let $D_{\Delta} = D \cup \{ \Delta_{D} \}$ be the one-point compactification of $D$,
set $m|_{D} \coloneqq m|_{\mathcal{B}(D)}$, let
$X^{D} = (\Omega, \filt_{\infty}, (X^{D}_{t})_{t \in [0, \infty]}, (P_{x})_{x \in D_{\Delta}})$
be the part process of $X$ on $D$ as defined in Remark \ref{2. rem: extension to part process}, and let
$\hat{X}^{D} = (\hat{\Omega}, \hat{\filt}_{\infty}, (\hat{X}^{D}_{t})_{t \in [0, \infty]}, (\hat{P}_{x})_{x \in D_{\Delta}})$
be the part process of $\hat{X}$ on $D$, which is defined from $\hat{X}$ in exactly the same way as $X^{D}$ is from $X$.
Then $X^{D},\hat{X}^{D}$ are standard processes on $D$ by \cite[Proof of Theorem A.2.10]{Fukushima_Oshima_Takeda_11_Dirichlet}
(see also \cite[Exercises 3.3.7(ii) and 4.1.9(i)]{Chen_Fukushima_12_Symmetric})
and satisfy \ref{2. item: absolute continuity} with respect to $m|_{D}$ by \ref{2. item: absolute continuity} of $X,\hat{X}$,
and we easily see from \cite[Proofs of Lemmas 4.1.2 and 4.1.3]{Fukushima_Oshima_Takeda_11_Dirichlet},
with the uses of the $m$-symmetry of $X$ and \cite[Theorem A.2.4]{Fukushima_Oshima_Takeda_11_Dirichlet}
replaced by those of \eqref{4. eq: duality} and \cite[Chapter I, Theorem 10.16]{Blumenthal_Getoor_68_Markov},
respectively, that $X^{D},\hat{X}^{D}$ satisfy \eqref{4. eq: duality} with respect to $m|_{D}$.
Hence there exists a unique transition density $p^{D}$ of $(X^{D},\hat{X}^{D})$ with respect to $m|_{D}$,
and then, extending $p^{D}$ to $(0, \infty] \times D \times D_{\Delta}$ and $m|_{D}$ to a Borel measure
$(m|_{D})_{\Delta}$ on $D_{\Delta}$ as in Remark \ref{2. rem: extension to part process},
we have the following.
\begin{equation} \label{4. eq: extension to part process with dual}
\emph{The statement \eqref{2. eq: extension to part process} holds in the present setting.}
\end{equation}
Indeed, the arguments in Remark \ref{2. rem: extension to part process} after \eqref{2. eq: extension to part process}
remain applicable and show \eqref{4. eq: extension to part process with dual} (and the analogous extensions of
Lemma \ref{2. lem: basic Kac formula} and Proposition \ref{prop: improved kac formula} to general $D$),
with the only change being that \eqref{2. eq: PCAF for part process} is now implied by
\cite[Exercise 4.1.9]{Chen_Fukushima_12_Symmetric} and \cite[Th\'{e}or\`{e}me~V.5]{Revuz_70_Mesures} instead.

To be precise, this deduction of \eqref{2. eq: PCAF for part process} from \cite[Th\'{e}or\`{e}me~V.5]{Revuz_70_Mesures}
requires a justification because, for each $\alpha \in (0,\infty)$, the function $V^{\alpha}_{B}(x,y)$
in \cite[Th\'{e}or\`{e}me~V.5]{Revuz_70_Mesures} with $B \coloneqq S \setminus D$ might not be defined, or coincide with
$\resol^{D}_{\alpha}(x,y) \coloneqq \int_{0}^{\infty} e^{-\alpha t} p^{D}_{t}(x,y) \, dt$, for any $x,y \in D$.
Indeed, we easily see from Fubini's theorem and the strong Markov property of $\hat{X}$
(see, e.g., \cite[Theorem A.1.21]{Chen_Fukushima_12_Symmetric}) that
$V^{\alpha}_{B}(x,y) = \resol^{D}_{\alpha}(x,y)$ for $m$-a.e.\ $x \in D$ for each $y \in D$,
and then for any Borel measurable function $f \colon D \to [0,\infty]$ we have
\begin{equation}
  E_{x}\biggl[ \int_{0}^{\tau_{D}} e^{-\alpha t} f(X_{t})\, dA_{t} \biggr]
  =
  \int_{D} V^{\alpha}_{B}(x,y) f(y)\, \mu(dy)
  =
  \int_{D} \resol^{D}_{\alpha}(x,y) f(y)\, \mu(dy)
\end{equation}
for $m$-a.e.\ $x \in D$ by \cite[Th\'{e}or\`{e}me~V.5]{Revuz_70_Mesures} and Fubini's theorem
and thus for any $x \in D$ by \cite[Chapter~IV, Proposition~2.4(i)]{Blumenthal_Getoor_68_Markov},
\ref{2. item: absolute continuity} and \cite[Theorem A.2.17(iii)]{Chen_Fukushima_12_Symmetric},
proving \eqref{2. eq: PCAF for part process} by virtue of Lemma \ref{2. lem: Revuz correspondence in the strict sense}.
\end{rem}

\appendix

\section*{Acknowledgements}
The authors thank Dr.\ Takahiro Mori and Dr.\ Takumu Ooi for the information on previous research on gaugeability and the extended Kato class,
and thank Prof.\ Mathav Murugan for his suggestion of mentioning the reference \cite{Molchanov_Ostrovskii_69_stable_trace}.
The authors are grateful also to the anonymous referee for valuable comments and suggestions, 
especially those leading to Remark \ref{3. rem: optional projection} on optional projections and to
Section \ref{sec: extension to processes with duals} on the extension to the setting of a standard process with a dual.
Naotaka Kajino was supported in part by JSPS KAKENHI Grant Number JP23K22399.
Ryoichiro Noda was supported by JSPS KAKENHI Grant Number JP24KJ1447.
This work was supported by the Research Institute for Mathematical Sciences, an International Joint Usage/Research Center located in Kyoto University.

\bibliographystyle{amsplain}
\bibliography{ref_Kac_formula}
\end{document}